\documentclass{amsart}
\usepackage{amsmath}
\usepackage{amssymb}
\usepackage{amsthm}
\usepackage{amsfonts}
\usepackage{xcolor}
\usepackage[backend=biber]{biblatex}
\usepackage{hyperref}
\bibliography{main.bbl}

\newtheorem{theorem}{Theorem}[section]
\newtheorem{lemma}[theorem]{Lemma}
\newtheorem{corollary}[theorem]{Corollary}

\theoremstyle{definition}
\newtheorem{definition}[theorem]{Definition}

\theoremstyle{remark}
\newtheorem{remark}[theorem]{Remark}

\DeclareMathOperator{\diam}{\mathrm{diam}}

\newcommand{\floor}[1]{\left\lfloor #1 \right\rfloor} 
 
\newcommand{\br}[1]{\left\{ #1 \right\}}
\newcommand{\abs}[1]{\left| #1 \right|}
\newcommand{\pr}[1]{\left( #1 \right)}

\newcommand{\N}{\mathbb{N}}

\newcommand{\bP}{\boldsymbol{\Pi}}

\author[B. Mance]{Bill Mance}
\address[B. Mance]{Uniwersytet im. Adama Mickiewicza w Poznaniu, Collegium Mathematicum, ul. Umultowska 87, 61-614 Pozna\'{n}, Poland}
\email{william.mance@amu.edu.pl}

\author{Jakub Tomaszewski}
\address[J. Tomaszewski]{
	AGH University of Krakow, Faculty of Applied Mathematics,
    al.\ Mickiewicza 30,
    30-059 Krak\'ow, Poland.
}
\email{tomaszew@agh.edu.pl}

\title[Hausdorff dimension of difference sets between \textit{TpcN} and \textit{Normal}]{On the Hausdorff dimension of the difference sets between typical and normal numbers}

\begin{document}
\begin{abstract}
    We study the Hausdorff dimension of the difference sets between the set of typical numbers, defined via the Erd\H{o}s--R\'enyi law governing the longest run of digits, and the set of numbers normal in base $2$. Although both properties hold for Lebesgue-almost every real number, neither implies the other. The resulting difference sets $\textit{TpcN}\setminus\textit{Normal}$ and $\textit{Normal}\setminus\textit{TpcN}$ are $D_2(\bP_3^0)$-complete in the Borel hierarchy \cite{my_paper}. We show that this logical complexity is matched geometrically: both difference sets have full Hausdorff dimension. Unlike the classical Besicovitch-type sets governing single-digit frequencies, normality requires the correct frequency of every finite block simultaneously, and the explicit Moran-set constructions that succeed for simple normality break down under this requirement. We instead establish the dimension via a counting argument bypassing the need for an explicit construction.
\end{abstract}

\maketitle

\section{Introduction}

Let $\tilde{X}=\{0,1\}^\mathbb{N}$ be the space of infinite binary sequences with the product topology induced from the discrete topology on $\{0,1\}$. A compatible metric is defined for $\tilde{x}, \tilde{x}'\in \tilde{X}$ as follows:
\begin{equation*}
    d(\tilde{x}, \tilde{x}'):=\begin{cases}
             0, \;\text{if} \:\tilde{x}= \tilde{x}', \\
             {2^{-k}},\; \text{where} \:k=\min\{j\in\mathbb{N}\colon x_j\neq x'_j\}, \;\text{if otherwise.}
        \end{cases}
\end{equation*}
Let $\tilde{x}=(x_n)_{n=1}^\infty$ be a binary sequence. It is well-known that any such sequence can be associated to a number $x\in[0, 1]$ by a formula
\begin{equation*}
    x=\sum_{n=1}^\infty\frac{x_n}{2^n}.
\end{equation*}
Then, $\tilde{x}$ is the \emph{binary expansion} of $x$.
\begin{remark}
    It is common to consider $\tilde{x}$ as an infinite binary word instead of an infinite sequence in a product space $\{0, 1\}^\mathbb{N}$. In this view, we will freely use throughout this paper the notation of the standard concatenation of finite blocks, meaning that if $A=(a_1,\dots, a_n)\in\{0, 1\}^n, B=(b_1,\dots, b_m)\in\{0, 1\}^m$ for $n, m\in\mathbb{N}$, then $AB=(a_1,\dots, a_n, b_1,\dots, b_m)\in\{0, 1\}^{n+m}$. Similarly, we denote these concatenations by
    $$AB=a_1,\dots, a_n, b_1,\dots, b_m,$$ i.e. we write sequences of digits as strings. We extend this notation to infinite words as infinite concatenations of finite blocks. 
\end{remark}

\begin{remark}
    In this paper, we will always use the $\log$ function to denote the base $2$ logarithm while the $\ln$ function will be used for the natural logarithm.
\end{remark}

The study of typical numbers has its origin in probability theory rather than in metric number theory. In 1970, Erd\H{o}s and R\'enyi proved in \cite{erdos_renyi} what they called a \emph{new law of large numbers}, describing the length of the longest run of successes in a sequence of independent coin tosses. Writing $G_n$ for the cumulative outcome after $n$ tosses, where a success contributes $+1$ and a failure contributes $-1$ to $G_n$, their result (in the special case of a fair coin) states that
\begin{equation*}
    P\left(\lim_{N\to\infty}\max_{0\leq n\leq N-[\log N]}\frac{G_{n+[\log N]}-G_n}{[\log N]}=1\right)=1,
\end{equation*}
where $[x]$ denotes the integer part of $x$. In words, if we record the outcome of the $n$-th toss as $x_n=1$ for a success and $x_n=0$ for a failure, then \emph{typically} (i.e. almost surely) the length of the longest run of consecutive successes among the first $N$ tosses is asymptotic to $\log N$. Identifying a sequence of coin tosses with a binary sequence $\tilde x=(x_n)_{n=1}^\infty$ in the natural way, we land in the world of binary expansions of numbers in $[0, 1]$.

In any binary expansion $\tilde{x}$ of a number $x\in[0, 1]$ we can look for different regularities. One approach is to analyze the behavior of the so-called \emph{run-length} function
\begin{equation*}
    R_n(\tilde{x})=\max\{j\colon x_{i+1}=...=x_{i+j}=1\; \text{for some}\; 0\leq i\leq n-j\},
\end{equation*}
which returns the length of the longest run of consecutive ones among the first $n$ elements of $\tilde{x}$. This approach directly links the resulting observations with the assertions in \cite{erdos_renyi}.
\begin{definition}
    We say that a number $x\in[0, 1]$ is \emph{typical} if its binary expansion $\tilde{x}$ satisfies $\lim_{n\to\infty}\frac{R_n(\tilde{x})}{\log n}=1$.
\end{definition}
Hence, the special case of the Erd\H{o}s--R\'enyi law relevant to us can now be stated as follows.
\begin{theorem}[{\cite[Theorem 1]{erdos_renyi}}]
    The set of typical numbers $\textit{TpcN}\subset[0, 1]$ is of full Lebesgue measure.
\end{theorem}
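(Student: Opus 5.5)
We will prove that for Lebesgue-almost every $x$ (equivalently, for almost every sequence of independent fair bits $x_n$) we have $R_n(\tilde x)/\log n\to 1$. We treat the upper bound $\limsup_{n} R_n/\log n\le 1$ and the lower bound $\liminf_{n} R_n/\log n\ge 1$ separately. In each case we apply Borel--Cantelli along a geometric subsequence $n_k=2^k$, and then interpolate using the monotonicity of $R_n$ in $n$. The dyadic rationals, which have two expansions, form a null set and can be ignored.

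\textbf{Upper bound.} Fix $\varepsilon>0$. For each $n$, the event $\{R_n\ge (1+\varepsilon)\log n\}$ requires a run of $m=\lceil(1+\varepsilon)\log n\rceil$ ones starting at some position $i+1\le n$. A union bound therefore gives
\[
P\bigl(R_n\ge m\bigr)\le n2^{-m}\le n^{-\varepsilon}.
\]
Along $n_k=2^k$ this is $2^{-k\varepsilon}$, which is summable. By Borel--Cantelli, almost surely $R_{2^k}<(1+\varepsilon)k$ for all large $k$. For $2^{k-1}\le n\le 2^k$, monotonicity gives
\[
R_n\le R_{2^k}<(1+\varepsilon)k\le(1+\varepsilon)(\log n+1).
\]
Hence $\limsup_n R_n/\log n\le 1+\varepsilon$. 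Letting $\varepsilon\to 0$ along a countable sequence gives the upper bound.

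\textbf{Lower bound.} Fix $\varepsilon\in(0,1)$ and set $m=\lfloor(1-\varepsilon)\log n\rfloor$. Cut $\{1,\dots,n\}$ into $\lfloor n/m\rfloor$ disjoint blocks of length $m$. The events that a given block consists entirely of ones are independent, each with probability $2^{-m}\ge n^{-(1-\varepsilon)}$. Therefore
\[
P\bigl(R_n<m\bigr)\le\bigl(1-n^{-(1-\varepsilon)}\bigr)^{\lfloor n/m\rfloor}\le \exp\Bigl(-c\,\frac{n^{\varepsilon}}{\log n}\Bigr).
\]
This bound is summable even over all $n$, so no subsequence is needed. By Borel--Cantelli, almost surely $R_n\ge (1-\varepsilon)\log n-1$ eventually. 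Hence $\liminf_n R_n/\log n\ge 1-\varepsilon$, and again we let $\varepsilon\to0$.

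Intersecting the countably many full-measure events from both parts shows that $\textit{TpcN}$ has full measure. The main obstacle is the upper bound: the union bound only yields $n^{-\varepsilon}$, which is not summable over all $n$. This is why we pass to the geometric subsequence and use monotonicity to control $R_n$ between consecutive terms. The lower bound is routine once we use disjoint blocks to obtain independence.
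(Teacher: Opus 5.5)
Your proof is correct, but it takes a different route from the paper. The paper gives no proof of this statement: it cites Erd\H{o}s--R\'enyi's ``new law of large numbers'' for the maximal window averages $\max_n (G_{n+[c\log N]}-G_n)/[c\log N]$ of a random walk, a large-deviation result for general i.i.d.\ increments, and translates its fair-coin case into words about runs.

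You instead work directly with $R_n$ as the paper defines it, in two steps:
\begin{itemize}
\item For the upper bound, the union bound $P(R_n\ge m)\le n2^{-m}$ gives only $n^{-\varepsilon}$. You therefore sum it along $n=2^k$ and interpolate using the monotonicity of $R_n$.
\item For the lower bound, independence of the $\lfloor n/m\rfloor$ disjoint blocks gives a stretched-exponential bound that is summable over all $n$.
\end{itemize}
The small points all check out: integrality of $R_n$ turns $R_{2^k}<\lceil(1+\varepsilon)k\rceil$ into $R_{2^k}<(1+\varepsilon)k$, the estimate $\lfloor n/m\rfloor\ge n/m-1$ holds, $m\ge 1$ for large $n$, and the countably many dyadic rationals can be discarded.

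The citation buys generality: arbitrary window lengths and increment distributions. Your argument buys a short, self-contained proof of exactly the special case the paper needs.

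It also avoids a translation step that is not literal. The single $c=1$ formula displayed in the paper does not by itself control $R_N$. A window of length $[\log N]$ with $o(\log N)$ zeros need not contain a long unbroken run, and the formula says nothing about all-ones windows of length $(1+\varepsilon)\log N$. The run-length law actually rests on the $c\neq 1$ regimes, or on Erd\H{o}s--R\'enyi's separate treatment of pure head runs. Your two estimates establish these directly.
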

\begin{remark}
    Note that the limit $\lim_{n\to\infty}\frac{R_n(\tilde{x})}{\log n}$ may not be equal to 1 or even exist. There have been a number of studies on sets of numbers to which this remark applies (see e.g. \cite{lim_normal_liminfsup_typical, liminfsup_normal_lim_typical}).
\end{remark}

Another natural approach to regularities of the binary expansion $\tilde{x}$ is based on the frequency with which finite blocks occur along it. For a finite or infinite binary string $\tilde{y}$, let $N(\tilde{y}, \omega, n)$ denote the number of occurrences of a finite block $\omega\in\{0, 1\}^k$ among the first $n$ elements of $\tilde{y}$.

\begin{definition}
    We say that a number $x\in[0, 1]$, whose binary expansion is $\tilde{x},$ is \emph{normal} (in base $2$) if for all $k\in\mathbb{N}$ and any finite block $\omega\in\{0, 1\}^k$ of length $k$ we have $\lim_{n\to\infty} \frac{N(\tilde{x}, \omega, n)}{n}=\frac{1}{2^k}$.
\end{definition}

Normal numbers were introduced by Borel \cite{borel}, who proved that Lebesgue-almost every real number is normal in every integer base. Thus, if $\mathcal{L}$ denotes Lebesgue measure on $[0,1]$, then the set of normal numbers $\textit{Normal}\subset [0, 1]$ satisfies
\begin{equation*}
    \mathcal{L}(\textit{Normal})=1.
\end{equation*}
It is natural to compare normality with typicality. A simple probabilistic heuristic suggests that normality should imply typicality. Indeed, in a random binary word of length $2^k$, the expected number of occurrences of the block $1^k$ is of order
\begin{equation*}
    2^k\cdot 2^{-k}=1.
\end{equation*}
One therefore expects a run of $k$ consecutive ones to appear around position $2^k$, which corresponds to
\begin{equation*}
    R_n(x)\approx\log n.
\end{equation*}
This heuristic is consistent with normality, since a normal number has the correct limiting frequency of every fixed finite block. There is, however, an important distinction: normality controls each fixed block separately, whereas typicality concerns runs whose length grows with the observation scale. Thus normality does not provide the uniform control over growing block lengths that the heuristic would require. In fact, neither property implies the other: in \cite{my_paper} it was shown by explicit construction that there exist normal numbers that are not typical, as well as typical numbers that are not normal.
\begin{remark}
    To avoid the ambiguity of binary expansions at dyadic rationals, throughout this paper we the expansion which is eventually equal to $0$. Hence we identify a number in $[0,1]$ with its chosen binary expansion. It will become clear that this approach has no impact on our considerations, as dyadic rationals are excluded from our constructions.
\end{remark}
Clearly, $\mathcal{L}(\textit{TpcN}\cap\textit{Normal})=1$, as both \textit{TpcN} and \textit{Normal} have full Lebesgue measure on $[0, 1]$.
Despite this measure-theoretic agreement, the distinction between the two notions is substantial from the point of view of the Borel hierarchy, a descriptive set-theoretical method of quantifying logical complexity (see e.g. \cite{miller_hierarchies} for more details). Both $\textit{TpcN}$ and $\textit{Normal}$ are $\bP_3^0$-complete, while the difference sets $\textit{TpcN}\setminus\textit{Normal}$ and $\textit{Normal}\setminus\textit{TpcN}$ are $D_2(\bP_3^0)$-complete, i.e. they have the highest logical complexity available to them \cite{ki_linton,my_paper}.
This raises a natural geometric question: how large are these difference sets in terms of Hausdorff dimension? Descriptive complexity alone gives no information about geometric size; sets positioned at relatively high Borel difference classes may have Hausdorff dimension zero. Our main result shows that this does not occur here: both difference sets have full Hausdorff dimension.

A natural point of comparison is provided by the classical Besicovitch digit-frequency sets. Let $x\in[0,1]$ and define
\begin{equation*}
    S_N(x)=\sum_{n=1}^N x_n.
\end{equation*}
For $\alpha\in[0,1]$, let
\begin{equation*}
    B(\alpha)=\br{x\in[0,1]\colon\lim_{n\to\infty}\frac{S_n(x)}{n}=\alpha}.
\end{equation*}
Thus $B(\alpha)$ consists of those binary expansions in which the digit $1$ occurs with limiting frequency $\alpha$. Sets of this type originate in the work of Besicovitch \cite{Besicovitch}; sets defined by prescribed digit frequencies in other integer bases are commonly referred to as \emph{Besicovitch--Eggleston sets}, following the subsequent work of Eggleston. Here we make no attempt to summarize the relevant literature, but it is worth mentioning a classical result of Colebrook \cite{Colebrook} that improves on the result of Eggleston.

Let
\begin{equation*}
    H(x)=-x\ln x-(1-x)\ln (1-x),
\end{equation*}
where $0\log 0=0$. Then
\begin{equation*}
    \max_{x\in[0,1]}H(x)=H\left(\frac{1}{2}\right)=\ln 2.
\end{equation*}
For $\beta\in[0,\infty]$, define
\begin{equation*}
    E(\beta)=\br{x\in[0,1]\colon\lim_{n\to\infty}\frac{R_n(x)}{\log n}=\beta}.
\end{equation*}
In particular,
\begin{equation*}
    E(1)=\textit{TpcN}.
\end{equation*}
The following theorem of Zhang and Peng describes the interaction between prescribed single-digit frequencies and run-length behavior.
\begin{theorem}[{\cite[Theorem 1.1]{lim_normal_lim_typical}}]
\label{BesicovitchTypical}
For every $\alpha\in[0,1]$ and $\beta\in[0,\infty]$,
\begin{equation*}
    \dim_H\bigl(B(\alpha)\cap E(\beta)\bigr)=\frac{H(\alpha)}{\ln 2}.
\end{equation*}
\end{theorem}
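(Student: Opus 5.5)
The equality splits into an upper bound and a lower bound.

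\emph{Upper bound.} This is classical and does not involve $\beta$. We have $B(\alpha)\cap E(\beta)\subset B(\alpha)$, and the Besicovitch--Eggleston bound gives $\dim_H B(\alpha)\le H(\alpha)/\ln 2$. To prove that bound, fix $\varepsilon>0$. For each $x\in B(\alpha)$ and all large $n$, the dyadic cylinder of length $n$ containing $x$ has between $(\alpha-\varepsilon)n$ and $(\alpha+\varepsilon)n$ ones. The number of such cylinders is at most $2^{n(H(\alpha)/\ln 2+\delta(\varepsilon))}$ by Stirling's formula. These cylinders, taken over $n\ge N$, give covers of $B(\alpha)$ of mesh $2^{-N}$. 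This yields $\mathcal{H}^{s}(B(\alpha))=0$ for every $s>H(\alpha)/\ln 2+\delta(\varepsilon)$, and we then let $\varepsilon\to0$.

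\emph{Lower bound.} I would build a Cantor subset by inserting runs. Take a Bernoulli$(\alpha)$ measure $\mu_\alpha$. By the strong law of large numbers and Shannon--McMillan--Breiman, for large $m$ we can find many blocks of length $m$ with the following properties:
\begin{itemize}
\item their frequency of ones is within $\varepsilon$ of $\alpha$;
\item they contain no run of ones longer than $\gamma\log m$ for a fixed small $\gamma$, using a standard tail estimate for i.i.d.\ runs when $\alpha<1$;
\item their number is at least $2^{m(H(\alpha)/\ln2-\varepsilon)}$.
\end{itemize}
Concatenate such blocks freely, with block lengths $m_j$ growing slowly so that $m_j=o(\sum_{i<j}m_i)$. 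At sparse prescribed positions $n_k$, insert a run $1^{\lfloor\beta\log n_k\rfloor}$ followed by a $0$. Choose the $n_k$ geometrically, say $n_{k+1}=\lceil(1+1/k)\,n_k\rceil$, so that $\log n$ varies negligibly between consecutive insertions. Then $R_n/\log n\to\beta$: the lower bound comes from the inserted runs, and the upper bound holds because free blocks only create runs of length $o(\log n)$ once $\gamma\to0$, or we replace $\gamma$ by the choice $\min(\gamma,\beta)$.

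The inserted material has total length $\sum_k\beta\log n_k$ up to $n$. This is $O(k\log n)$ with $k\approx\log n\cdot\log\log n$ or so, hence $o(n)$. So the insertions change neither the frequency $\alpha$ nor the entropy rate.

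\emph{Edge cases.} For $\beta=\infty$, insert runs of length $(\log n_k)^2$, which is still $o(n)$. For $\beta=0$, skip the insertions. For $\alpha=1$, we need $H=0$ and it suffices to exhibit one point. For example, take blocks $1^{L}0$ with $L\approx n^{\beta}$-type growth tuned so that the $0$-density tends to $0$ and the longest run grows like $\beta\log n$; this would require $\beta=\infty$ when $\alpha=1$. I would check that case separately: if $\alpha=1$ and $\beta<\infty$, then the zeros must have density $0$ while runs are only logarithmic, which is impossible. So the claimed formula needs $B(1)\cap E(\beta)=\emptyset$ to be read with the convention $\dim_H\emptyset=0=H(1)$, which is consistent. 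The case $\alpha=0$ is handled similarly.

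\emph{Dimension of the Cantor set.} I would put the uniform measure $\nu$ on the constructed set and apply the mass distribution principle. A cylinder of length $n$ receives mass at most $2^{-(H(\alpha)/\ln2-\varepsilon)(n-o(n))}$, because the forced insertions and the partial current block contribute only $o(n)$ lost entropy. This uses slow growth of the $m_j$ to control cylinders that cut a block in the middle. It gives $\dim_H\ge H(\alpha)/\ln2-2\varepsilon$.

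The main obstacle is this local estimate at scales inside a block, together with showing that the free blocks never produce runs exceeding $\beta\log n$ asymptotically. I would handle the latter by selecting the blocks within the typical set that have bounded maximal run, at negligible entropy cost.
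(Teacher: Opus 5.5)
The paper does not prove this theorem. It quotes it from Zhang and Peng, and the introduction only sketches the mechanism: both conditions are controlled along a sparse sequence of scales, using that $\log n$ barely changes between them. Your overall plan is the right one, and it mirrors the paper's own Moran construction for $\textit{Normal}\setminus\textit{TpcN}$. That plan is the Besicovitch--Eggleston counting bound above, plus a homogeneous Cantor set of high-entropy blocks with sparse inserted runs of ones below. Your upper bound is correct as written. The mass-distribution step is also fine given $m_j=o(\sum_{i<j}m_i)$. However, the lower-bound set as you specify it is not contained in $B(\alpha)\cap E(\beta)$, for three reasons.

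\emph{Insertion schedule.} The recursion $n_{k+1}=\lceil(1+1/k)n_k\rceil$ is not geometric. Since $\prod_{i<k}(1+1/i)=k$, it makes $n_k$ grow linearly up to a logarithmic factor. So for $\beta>0$ there are $\gtrsim n/\log n$ insertions before position $n$, most of length $\asymp\beta\log n$, and the inserted ones occupy a positive proportion of the digits. Your count of about $\log n\cdot\log\log n$ insertions does not match this recursion. You only need $\log n_{k+1}/\log n_k\to1$, not $n_{k+1}/n_k\to1$. So $n_k=2^k$ works (this is the spacing in the paper's set $X$, where $N_k<N_{k+1}<sN_k$), leaving $O(\log^2 n)$ inserted digits, or $O(\log^3 n)$ for $\beta=\infty$.

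\emph{Frequency tolerance.} With a fixed tolerance $\varepsilon$ on block frequencies, free concatenation contains points whose running frequency oscillates inside $[\alpha-\varepsilon,\alpha+\varepsilon]$. So the set is not inside $B(\alpha)$. The frequency tolerance must tend to $0$ along the construction, with $\varepsilon$ kept only as the entropy slack.

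\emph{Run control.} For $\gamma<1/\log_2(1/\alpha)$, a typical Bernoulli$(\alpha)$ block of length $m$ has longest run about $\log_{1/\alpha}m>\gamma\log m$. A tail estimate therefore shows that such blocks are rare, not abundant. Their entropy cost is in fact $o(m)$, but proving that needs a lower-bound argument, not a tail bound. Moreover, $m_j=o(\sum_{i<j}m_i)$ allows $\log m_j\asymp\log n$ (e.g.\ $m_j=j$). Runs can also merge across block boundaries and with an inserted run. So neither $R_n\le(\beta+o(1))\log n$ nor the case $\beta=0$ follows, and replacing $\gamma$ by $\min(\gamma,\beta)$ does not help when $\beta=0$.

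The remedy in your last sentence works once made explicit, and it also fixes the frequency problem:
\begin{enumerate}
\item fix $L>1/(1-\alpha)$ and take each $m_j$ a multiple of $L$;
\item force a $0$ at every $L$-th position of each block;
\item place exactly $\lfloor\alpha m_j\rfloor$ ones among the remaining positions;
\item follow each inserted run by a $0$.
\end{enumerate}
Then every run of ones other than the inserted ones has length less than $L$, so $R_n/\log n\to\beta$ follows from $\log n_{k+1}/\log n_k\to1$. The frequency tends to $\alpha$ exactly. The per-digit entropy of the block family tends to $\frac{L-1}{L}H(\frac{\alpha L}{L-1})/\ln 2$, which tends to $H(\alpha)/\ln 2$ as $L\to\infty$.

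Finally, your claim that $B(1)\cap E(\beta)=\emptyset$ for $\beta<\infty$ is false. Runs $1^{\lfloor\beta\log n\rfloor}$ separated by single zeros give zero-density of order $1/\log n\to0$, and runs of length about $\log\log n$ handle $\beta=0$. This is harmless: for $\alpha\in\{0,1\}$ your upper bound already gives dimension $0$, and nothing needs to be exhibited.
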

Theorem~\ref{BesicovitchTypical} provides a natural starting point for the present problem. The set $B(\alpha)$ imposes a single digit-frequency condition, while $E(\beta)$ prescribes the asymptotic behavior of the longest run. In the arguments leading to Theorem~\ref{BesicovitchTypical}, these conditions can be controlled along a suitably chosen sequence of scales. The relevant quantities are sufficiently stable between consecutive scales: the digit sum $S_n(x)$ can change only by the number of newly added digits, while the run-length function is monotone and can likewise be controlled once the scales are chosen appropriately. Note that in this approach we are given considerable freedom, as $\lim_{x\to\infty}(\log x)'=\lim_{x\to\infty}1/x=0$ and hence the sequence $\frac{R_n(x)}{\log n}$ cannot fluctuate substantially between a wide range of scales. This makes it possible to construct sets with the required behavior by prescribing suitable finite blocks at successive stages.

Normality introduces a substantially different difficulty. Instead of controlling a single frequency, one must require
\begin{equation*}
    \frac{N(x,\omega,n)}{n}\longrightarrow 2^{-|\omega|}
\end{equation*}
for every finite binary block $\omega$. At a finite stage one may, of course, impose these conditions only for blocks up to some length $k$, with a prescribed error tolerance, and then let $k\to\infty$ and the tolerance tend to zero along the construction. The problem is that the number of conditions being imposed now grows with the stage, and the estimates used for a single digit frequency do not provide sufficiently uniform control over this growing family of overlapping block statistics. In particular, occurrences of a block may cross the boundaries between concatenated pieces, and a construction must ensure that corrections introduced at later stages do not destroy the frequency estimates already obtained at earlier ones.

This creates a serious obstacle for a direct adaptation of the classical Moran-set approach. In the single-frequency setting, one can explicitly choose large families of finite blocks whose proportion of ones is close to a prescribed value and then concatenate them. The construction is robust because altering a negligible proportion of digits has a negligible effect on the only statistic being controlled. For normality, however, a block chosen at a given stage must satisfy many frequency conditions simultaneously. As the admissible block lengths increase, one must control an increasing collection of overlapping patterns, the errors produced at concatenation points, and the influence of all preceding stages. At the same time, a Hausdorff dimension lower bound requires sufficiently many choices to remain available at every stage. Thus it is not enough merely to exhibit some blocks with approximately correct frequencies: one needs quantitative information showing that there are \emph{many} such blocks.

This is precisely where the previous explicit construction becomes inefficient. Attempting to prescribe the admissible blocks individually leads to increasingly complicated bookkeeping, while the dimension estimate depends not on the identity of the blocks but on their abundance. This suggests reversing the point of view: instead of constructing good blocks one by one, estimate directly how many good blocks there are.

Our argument follows this approach. At stage $k$, we regard binary words of a suitable length as outcomes of independent fair coin tosses and estimate the proportion that fail to have approximately the expected number of occurrences of some block $\omega\in\{0,1\}^k$. Although the indicators of occurrences of $\omega$ may overlap, their starting positions can be separated into residue classes modulo $k$, within each of which the indicators are independent. Chernoff--Hoeffding estimates, followed by a union bound over the $2^k$ possible blocks $\omega$, show that the proportion of words failing the $(\epsilon_k,k)$-normality condition can be made arbitrarily small.

Consequently, at every stage $k$ a proportion at least $1-\delta_k$ of all binary words remains admissible, so the number of available words is still exponential in their length. We use these large admissible families as the branches of a homogeneous Moran construction, inserting short runs of ones between successive blocks to force failure of typicality. Lemma~\ref{epsilonkmnormality} transfers the stage-$k$ estimates to every fixed shorter block length, with errors tending to zero, and the hotspot lemma then yields normality of every point in the resulting set. Since the inserted runs have negligible relative length and the admissible families retain asymptotically full exponential size, the Moran set has full Hausdorff dimension. Thus, the counting argument avoids having to select or describe the normal blocks individually while preserving the branching needed for the dimension estimate.

This counting construction establishes the nontrivial inclusion needed for the set $\textit{Normal}\setminus\textit{TpcN}$, while the full Hausdorff dimension of $\textit{TpcN}\setminus\textit{Normal}$ follows from Theorem~\ref{BesicovitchTypical}. Together, these arguments prove the following theorem.

\begin{theorem}\label{Main}
    The Hausdorff dimension
\begin{equation*}
\dim_H(\textit{TpcN}\setminus\textit{Normal})=\dim_H(\textit{Normal}\setminus \textit{TpcN})=1.
\end{equation*}
\end{theorem}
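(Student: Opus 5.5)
For $\textit{TpcN}\setminus\textit{Normal}$ we use Theorem~\ref{BesicovitchTypical} with $\beta=1$. For $\alpha\neq\frac12$, every $x\in B(\alpha)$ has digit-$1$ frequency $\alpha\neq 2^{-1}$, so $x$ is not normal. Hence $B(\alpha)\cap E(1)\subset \textit{TpcN}\setminus\textit{Normal}$, and
\begin{equation*}
\dim_H(\textit{TpcN}\setminus\textit{Normal})\ge \sup_{\alpha\neq 1/2}\frac{H(\alpha)}{\ln 2}=1,
\end{equation*}
using the continuity of $H$ at $\frac12$. The upper bound is trivial, so this half is done.

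For $\textit{Normal}\setminus\textit{TpcN}$ we build a homogeneous Moran set of normal, non-typical points. Fix $\epsilon_k\downarrow 0$ and lengths $L_k$ growing fast, say $L_k\ge 2^{2k}$ with $k^2 2^k\ll \epsilon_k^2 L_k$. Call a word $w\in\{0,1\}^{L_k}$ \emph{$(\epsilon_k,k)$-normal} if $\bigl|N(w,\omega,L_k)/L_k-2^{-k}\bigr|\le\epsilon_k 2^{-k}$ for every $\omega\in\{0,1\}^k$.

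The first step is counting. Split the starting positions into residue classes mod $k$; within each class the occurrence indicators of $\omega$ are independent Bernoulli$(2^{-k})$ variables. Hoeffding's inequality on each class, followed by a union bound over the $k$ classes and the $2^k$ blocks, bounds the proportion of bad words by $\delta_k\le k2^{k+1}\exp\bigl(-c\,\epsilon_k^2 4^{-k}L_k/k\bigr)$. Choosing $L_k$ large makes $\delta_k\to 0$, so the admissible family $\mathcal W_k$ satisfies $|\mathcal W_k|\ge (1-\delta_k)2^{L_k}$.

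The second step is the construction. At stage $k$ we concatenate $m_k$ words from $\mathcal W_k$, freely and independently chosen. After each such stage we insert a run $1^{r_k}$ at the current position $n_k$, with $r_k=\lceil 2\log n_k\rceil$, so that $R_{n_k+r_k}/\log(n_k+r_k)\ge 2-o(1)$ and typicality fails. Pick $m_k$ so that $m_kL_k$ dominates everything before it, say $m_kL_k\ge k\,n_{k-1}$, while the inserted runs have total length $o(n)$.

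For normality, fix a block $\omega$ of length $j$. By Lemma~\ref{epsilonkmnormality}, $(\epsilon_k,k)$-normality transfers to $\omega$ with error tending to zero. The boundary effects cost at most $j$ per word junction and per inserted run, which is $o(n)$. The hotspot lemma, or a direct averaging argument, then gives normality at every $n$, including $n$ in the middle of a stage, because $L_k=o(n_{k-1})$ can be arranged.

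The dimension bound is the standard Moran estimate: $\dim_H\ge\liminf_n \log(\#\text{cylinders at level } n)/n$. The count satisfies $\sum\log|\mathcal W_i|\ge n-\sum L_i\log\frac{1}{1-\delta_i}-\sum r_i$, and the conditions $\delta_k\to 0$ and $r_k=o(L_k)$ make this $n(1-o(1))$. Intermediate $n$ inside a word are handled by requiring $L_k=o(n_{k-1})$. This gives $\dim_H=1$.

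The main obstacle is reconciling the parameter growth. We need $L_k$ huge relative to $2^k$ so that $\delta_k\to0$, but $L_k=o(n_{k-1})$ so that partial words and partial stages do not spoil normality or the liminf in the dimension estimate. The plan is to choose $k\mapsto L_k$ first and then take $m_{k-1}$ large enough that $n_{k-1}\ge kL_k$.
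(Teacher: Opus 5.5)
Your proposal is correct and follows essentially the same route as the paper. For $\textit{TpcN}\setminus\textit{Normal}$ you use the Zhang--Peng theorem. For $\textit{Normal}\setminus\textit{TpcN}$ you count $(\epsilon_k,k)$-normal words via Hoeffding on residue classes mod $k$ plus a union bound over blocks, and use these words as the branches of a homogeneous Moran set into which runs of ones are inserted to destroy typicality.

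Your variations are all sound. Relative errors $\epsilon_k2^{-k}$ remove the need for $\epsilon_{k+1}\le\epsilon_k/3$; a union bound over residue classes replaces Jensen; and many words per stage with $L_k=o(n_{k-1})$ lets normality follow by direct averaging rather than from the hotspot lemma with $N_{k+1}<sN_k$.

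One slip: your stated growth condition $k^22^k\ll\epsilon_k^2L_k$ does not by itself give $\delta_k\to0$. You need $\epsilon_k^2 4^{-k}L_k/k-(k+1)\ln 2-\ln k\to\infty$, e.g.\ $L_k\ge k^3 4^k\epsilon_k^{-2}$. Your instruction to ``choose $L_k$ large'' does cover this.
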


Thus, the classical and the new laws of large numbers, despite holding simultaneously almost surely, are independent both logically and geometrically.

\section{Preliminary definitions and notions}
We will need to define the notion of $(\epsilon,k)$-normality, which is a central concept in the study of normal numbers.
\begin{definition}\label{eps_block}
    Let $k \in \N$ and $\epsilon>0$. A finite binary block of length $\ell$ is {\it $(\epsilon, k)$-normal} if the number of occurrences of every binary block of length $k$ is strictly between $(2^{-k}-\epsilon)\ell$ and $(2^{-k}+\epsilon)\ell$.
\end{definition}
Slight variants of Definition~\ref{eps_block} appear in the literature. See the book of Bugeaud \cite{Bugeaud} for information on the history of $(\epsilon,k)$-normality.
We will need the following basic lemma about $(\epsilon,k)$-normality.
\begin{lemma}\label{epsilonkmnormality}
    Let $k, m \in \N$ and $\epsilon>0$. Suppose that a finite block of digits $B$ is $(\epsilon,k+m)$-normal. Then, $B$ is $\pr{2^m\epsilon+\frac{m}{|B|},k}$-normal.
\end{lemma}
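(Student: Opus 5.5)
The plan is to relate occurrences of a short block to occurrences of its extensions. Fix a block $\omega\in\{0,1\}^k$ and write $\ell=|B|$.

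\textbf{Counting identity.} Every occurrence of $\omega$ in $B$ starting at a position $i\le \ell-(k+m)+1$ extends uniquely to an occurrence of a block $\omega\tau$, where $\tau\in\{0,1\}^m$ is given by the $m$ digits of $B$ following that occurrence. Conversely, each occurrence of some $\omega\tau$ yields an occurrence of $\omega$ at the same position. Occurrences of $\omega$ are lost only when they start at one of the last $m$ admissible positions. This gives
\begin{equation*}
\sum_{\tau\in\{0,1\}^m} N(B,\omega\tau,\ell)\;\le\; N(B,\omega,\ell)\;\le\; \sum_{\tau\in\{0,1\}^m} N(B,\omega\tau,\ell)+m.
\end{equation*}

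\textbf{Applying the hypothesis.} By $(\epsilon,k+m)$-normality, each of the $2^m$ summands lies strictly between $(2^{-k-m}-\epsilon)\ell$ and $(2^{-k-m}+\epsilon)\ell$. Summing over $\tau$, the sum lies strictly between $(2^{-k}-2^m\epsilon)\ell$ and $(2^{-k}+2^m\epsilon)\ell$.

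\textbf{Conclusion.} Inserting this into the counting identity yields
\begin{equation*}
(2^{-k}-2^m\epsilon)\ell \;<\; N(B,\omega,\ell) \;<\; (2^{-k}+2^m\epsilon)\ell+m \;=\; \Bigl(2^{-k}+2^m\epsilon+\tfrac{m}{\ell}\Bigr)\ell .
\end{equation*}
The lower bound $(2^{-k}-2^m\epsilon)\ell$ is at least $\bigl(2^{-k}-2^m\epsilon-\frac{m}{\ell}\bigr)\ell$, so both strict inequalities required by Definition~\ref{eps_block} hold with parameter $2^m\epsilon+\frac{m}{|B|}$. Since $\omega$ was arbitrary, $B$ is $\bigl(2^m\epsilon+\frac{m}{|B|},k\bigr)$-normal.

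\textbf{Main obstacle.} The only delicate point is the boundary bookkeeping: checking that at most $m$ occurrences of $\omega$ near the end of $B$ fail to extend, and that no occurrence is double counted. Each occurrence at a position with at least $m$ following digits determines exactly one $\tau$, so the correspondence is a bijection on those positions. This needs no case analysis beyond fixing the convention for $N$ on a finite block of length $\ell$.
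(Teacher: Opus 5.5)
Your proof is correct and follows the paper's argument essentially verbatim. It uses the same sandwich $\sum_{\tau}N(B,\omega\tau,|B|)\le N(B,\omega,|B|)\le\sum_{\tau}N(B,\omega\tau,|B|)+m$, then sums the $2^m$ strict bounds given by $(\epsilon,k+m)$-normality. Your remark that $(2^{-k}-2^m\epsilon)\ell\ge(2^{-k}-2^m\epsilon-\frac{m}{\ell})\ell$ only spells out the lower-bound step the paper leaves implicit.
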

\begin{proof}
    Let $\omega \in \{0,1\}^k$. We note that
    $$
    \sum_{\phi \in \{0,1\}^m}N(B, \omega \phi, |B|) \leq N(B, \omega, |B|) \leq \sum_{\phi \in \{0,1\}^m}N(B, \omega \phi, |B|)+m,
    $$
    where the extra $m$ term in the upper bound comes from possible occurrences of $\omega$ in the last $m$ places of $B$ that may not be counted by blocks of the form $\omega \phi$.
    Since $B$ is $(\epsilon,k+m)$-normal, we have that for all $\phi \in \{0,1\}^m$
    $$
    |B|(2^{-(k+m)}-\epsilon)<N(B, \omega \phi, |B|)<|B|(2^{-(k+m)}+\epsilon).
    $$
    Summing over all $2^m$ possible values of $\phi$, we see that
    $$
    |B|(2^{-k}-2^m\epsilon)<N(B, \omega, |B|)<|B|(2^{-k}+2^m\epsilon)+m = |B|\pr{2^{-k}+\pr{2^m\epsilon+\frac{m}{|B|}}}.
    $$
    
\end{proof}
We will use a version of the classical hotspot lemma which may be found in \cite[Theorem 4.6]{Bugeaud}
\begin{theorem}[Hotspot lemma]\label{constant_C}
The number $x\in[0, 1]$ is normal if and only if there exists a positive constant $C$ such that for all $k\in\mathbb{N}$ and finite blocks $\omega\in\{0, 1\}^k$ of length $k$ we have
\begin{equation*}
    \limsup_{n\to\infty}\frac{N(x, \omega, n)}{n}\leq \frac{C}{2^k}.
\end{equation*}
\end{theorem}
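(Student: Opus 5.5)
The direction ``normal $\Rightarrow$ bound'' is immediate: if $x$ is normal, then $\lim_{n}N(x,\omega,n)/n=2^{-k}$ for every $\omega\in\{0,1\}^k$, so $C=1$ works. The content lies in the converse. My plan is to show that every limit point of the empirical block frequencies of $x$ is the uniform Bernoulli measure. I will do this by passing to a limiting shift-invariant measure, showing that it is absolutely continuous with respect to the Bernoulli measure, and then using ergodicity.

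\textbf{Step 1: a limiting measure.} Fix $k$ and $\omega\in\{0,1\}^k$, and let $(n_j)$ be a subsequence along which $N(x,\omega,n_j)/n_j$ converges. There are countably many finite blocks, so a diagonal argument gives a further subsequence, still denoted $(n_j)$, along which
\[
\mu([w]):=\lim_{j\to\infty}\frac{N(x,w,n_j)}{n_j}
\]
exists for every finite block $w$. Counting occurrences in a prefix of length $n_j$ gives
\[
\sum_{a\in\{0,1\}}N(x,wa,n_j)=N(x,w,n_j)+O(|w|)
\quad\text{and}\quad
\sum_{a\in\{0,1\}}N(x,aw,n_j)=N(x,w,n_j)+O(|w|).
\]
This is the same boundary bookkeeping as in Lemma~\ref{epsilonkmnormality}. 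Dividing by $n_j$ and letting $j\to\infty$, the first identity shows that $\mu$ is a consistent family on cylinders. The normalisation $\mu(\{0,1\}^\mathbb{N})=1$ also holds, so by Kolmogorov extension $\mu$ is a Borel probability measure on $\tilde X$. The second identity shows that $\mu$ is invariant under the left shift $T$.

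\textbf{Step 2: absolute continuity.} Let $\lambda$ be the $(\tfrac12,\tfrac12)$-Bernoulli measure, so that $\lambda([w])=2^{-|w|}$. The hypothesis gives $\mu([w])\le C\,2^{-|w|}=C\lambda([w])$ for every cylinder. Cylinders form a $\pi$-system generating the Borel sets, and every open set is a disjoint union of cylinders. Hence $\mu(A)\le C\lambda(A)$ for all Borel $A$, so $\mu\ll\lambda$ with density $f=d\mu/d\lambda$ satisfying $0\le f\le C$. In particular $f\in L^2(\lambda)$.

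\textbf{Step 3: invariance of the density and ergodicity.} This is the step I expect to need the most care. Let $U g=g\circ T$ be the Koopman operator on $L^2(\lambda)$; it is an isometry because $\lambda$ is $T$-invariant. Let $P$ be its adjoint, the transfer operator. Invariance of $\mu$ means $\int g\circ T\, f\,d\lambda=\int g f\,d\lambda$ for all bounded $g$, which says exactly that $Pf=f$. Then
\[
\langle Uf,f\rangle=\langle f,Pf\rangle=\|f\|_2^2=\|Uf\|_2\|f\|_2 .
\]
Equality holds in Cauchy--Schwarz, so $Uf=f$, that is, $f\circ T=f$ $\lambda$-almost everywhere. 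The Bernoulli shift is ergodic, so $f$ is constant almost everywhere. Since $\mu$ is a probability measure, $f\equiv1$ and $\mu=\lambda$.

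\textbf{Conclusion.} In particular $\lim_j N(x,\omega,n_j)/n_j=\mu([\omega])=2^{-k}$. The subsequence in Step 1 was an arbitrary convergent one, and the ratios lie in $[0,1]$. So every limit point of $N(x,\omega,n)/n$ equals $2^{-k}$, and $\lim_n N(x,\omega,n)/n=2^{-k}$. This holds for every $k$ and every $\omega\in\{0,1\}^k$, hence $x$ is normal.
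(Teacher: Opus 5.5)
Your proof is correct, but it cannot follow the paper's proof, because the paper gives none. The hotspot lemma (Piatetski-Shapiro's criterion) is quoted as a black box from Bugeaud's book (Theorem 4.6). Only the direction ``uniform bound $\Rightarrow$ normal'' is ever used, in the lemma showing $X\subset\textit{Normal}$ with $C=2(s+1)$.

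What you supply is the standard ergodic-theoretic proof:
\begin{itemize}
\item subsequential limits of the empirical block frequencies define a shift-invariant probability measure $\mu$ on $\tilde{X}$;
\item the uniformity of $C$ in $k$ is exactly what gives $\mu\le C\lambda$ on every cylinder, hence a bounded density $f=d\mu/d\lambda$;
\item invariance of $\mu$ makes $f$ a fixed point of the transfer operator, and your Cauchy--Schwarz step (equivalently, $\|Uf-f\|_2^2=2\|f\|_2^2-2\langle Uf,f\rangle=0$) upgrades this to $f\circ T=f$;
\item ergodicity of the fair Bernoulli shift then forces $f\equiv 1$.
\end{itemize}
Compared with the citation, this makes the role of the uniform constant transparent. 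It carries over unchanged to any base, or to any ergodic shift-invariant reference measure in place of $\lambda$. The cost is reliance on Kolmogorov extension, Radon--Nikodym and ergodicity of the Bernoulli shift, and the argument is purely qualitative.

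One small precision in Step~2. A $\pi$-system argument transfers equalities of measures, not inequalities. To pass from cylinders to all Borel sets, use your remark that open sets are countable disjoint unions of cylinders, together with outer regularity of finite Borel measures. Alternatively, run a monotone class argument starting from the algebra of clopen sets. This is routine and does not affect correctness.
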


Before we do so, let us recall the definition of the Hausdorff dimension. For further details, see \cite[Chapter 2]{Falconer}.
\begin{definition}
 Let $(X,d)$ be a metric space and $S\subset X$. 
 We set 
 $$
 \mathcal{H}^s(S)=\lim_{\delta\to 0}\inf\br{\sum_{i=1}^\infty\diam(U_i)^s\colon \bigcup_{i=1}^\infty U_i\supset S, \diam U_i<\delta}
 $$ 
 to be the \emph{$s$-dimensional Hausdorff outer measure} of $S$. It is well known that $\mathcal{H}^s$ restricted to Borel subsets of $X$ is a measure. We define the \emph{Hausdorff dimension} of $S$ as $$\dim_H(S)=\inf\{s\geq 0: \mathcal{H}^s(S)=0\}=\sup\{s\geq 0:\; \mathcal{H}^s(S)=\infty\}.$$
\end{definition}
The following theorem is a compilation of known properties of the Hausdorff dimension. We will use these facts without further comments.
\begin{theorem}
    Let $(X, d)$ be a metric space.
    \begin{enumerate}
        \item If $E\subset F\subset X$, then $\dim_H(E)\leq \dim_H(F)$,
        \item if $F_1,F_2,\dots\subset X$ is a sequence of subsets of $X$, then $\dim_H(\bigcup_{n=1}^\infty F_n)=\sup_{n\in\mathbb{N}}\{\dim_H(F_n)\}$,
        \item if $E\subset X$ is countable, then $\dim_H(E)=0$.
    \end{enumerate}
\end{theorem}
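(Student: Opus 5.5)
We prove the three items directly from the definition of $\mathcal{H}^s$. Throughout we write $\mathcal{H}^s_\delta(S)$ for the infimum inside the limit, so $\mathcal{H}^s(S)=\lim_{\delta\to 0}\mathcal{H}^s_\delta(S)$. This limit exists because $\mathcal{H}^s_\delta$ is nonincreasing in $\delta$.

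First we record two auxiliary facts.

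\emph{Scaling fact.} If $t<s$ and $\mathcal{H}^t(S)<\infty$, then $\mathcal{H}^s(S)=0$. Indeed, for any $\delta$-cover we have $\diam(U_i)^s\le \delta^{s-t}\diam(U_i)^t$. Hence $\mathcal{H}^s_\delta(S)\le \delta^{s-t}\mathcal{H}^t_\delta(S)\le \delta^{s-t}\mathcal{H}^t(S)$, and the right-hand side tends to $0$ as $\delta\to 0$. Consequently, if $s>\dim_H(S)$, we can pick $t$ with $\dim_H(S)<t<s$. By the definition as an infimum there is $t'\le t$ with $\mathcal{H}^{t'}(S)=0$, and the scaling fact gives $\mathcal{H}^s(S)=0$. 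So $\mathcal{H}^s(S)=0$ for \emph{every} $s>\dim_H(S)$.

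\emph{Countable subadditivity.} Let $S\subset\bigcup_n F_n$, and fix $\delta,\varepsilon>0$. If some $\mathcal{H}^s_\delta(F_n)=\infty$ there is nothing to prove. Otherwise, for each $n$ choose a $\delta$-cover $\{U^n_i\}_i$ of $F_n$ with $\sum_i\diam(U^n_i)^s\le \mathcal{H}^s_\delta(F_n)+\varepsilon 2^{-n}$. The union of these countably many covers is a countable $\delta$-cover of $S$. Therefore
\[
\mathcal{H}^s_\delta(S)\le\sum_n\mathcal{H}^s_\delta(F_n)+\varepsilon\le\sum_n\mathcal{H}^s(F_n)+\varepsilon.
\]
Letting $\varepsilon\to0$ and then $\delta\to0$ gives $\mathcal{H}^s(S)\le\sum_n\mathcal{H}^s(F_n)$.

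\textbf{Item (1).} Let $E\subset F$. Every $\delta$-cover of $F$ is also a $\delta$-cover of $E$, so $\mathcal{H}^s_\delta(E)\le\mathcal{H}^s_\delta(F)$, and hence $\mathcal{H}^s(E)\le\mathcal{H}^s(F)$. Thus $\{s:\mathcal{H}^s(F)=0\}\subset\{s:\mathcal{H}^s(E)=0\}$. Taking infima yields $\dim_H(E)\le\dim_H(F)$.

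\textbf{Item (2).} Write $F=\bigcup_n F_n$. Since $F_n\subset F$, item (1) gives $\dim_H(F)\ge\sup_n\dim_H(F_n)$. For the reverse inequality, assume the supremum is finite and take any $s>\sup_n\dim_H(F_n)$. Then $s>\dim_H(F_n)$ for every $n$, so the scaling fact gives $\mathcal{H}^s(F_n)=0$ for all $n$. Countable subadditivity then gives $\mathcal{H}^s(F)=0$, so $\dim_H(F)\le s$. Letting $s$ decrease to the supremum proves equality.

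\textbf{Item (3).} For a single point $\{x\}$ and any $s>0$, the one-set cover $\{x\}$ has diameter $0$, so $\mathcal{H}^s_\delta(\{x\})=0$ for every $\delta$. Hence $\mathcal{H}^s(\{x\})=0$ for all $s>0$, and therefore $\dim_H(\{x\})=0$. (At $s=0$ we have $\mathcal{H}^0(\{x\})=1$ under the convention $\diam^0=1$ for nonempty sets, but this does not affect the infimum.) A countable set is a countable union of singletons, so item (2) gives dimension $\sup 0=0$.

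The only point requiring care is the scaling fact. It is what justifies using an arbitrary $s$ above $\dim_H$ in item (2), rather than only values of $s$ at which $\mathcal{H}^s$ is already known to vanish.
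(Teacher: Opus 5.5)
Your proof is correct: monotonicity, countable subadditivity of $\mathcal{H}^s$, and your scaling fact (giving $\mathcal{H}^s=0$ for every $s>\dim_H$, not just some) yield all three items, and the case of an infinite supremum in (2) is immediate from (1). The paper gives no proof of this statement, only quoting these as standard properties with a pointer to Falconer, Chapter 2, and your argument is the standard textbook derivation being cited there.
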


The following two results will be crucial in our reasoning.

\begin{theorem}[{\cite[Lemma 1, (4.16)]{Hoeffding}}]\label{Hoeffding_lemma}
    Let $X$ be a random variable on a probability space $(X, \mu)$ such that $a\leq X\leq b$ for $a, b\in\mathbb{R}$. Then
    \begin{equation*}
        \mathbb{E}e^{\lambda(X-\mathbb{E}X)}\leq e^{\frac{\lambda^2}{8}(b-a)^2}.
    \end{equation*}
\end{theorem}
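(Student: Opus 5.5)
The plan is the classical convexity argument followed by a second-order Taylor estimate of a cumulant-type function. First, I will reduce to the centred case. Replacing $X$ by $Y=X-\mathbb{E}X$ shifts the bounds to $a'=a-\mathbb{E}X\le Y\le b-\mathbb{E}X=b'$ without changing $b'-a'=b-a$. So it suffices to prove $\mathbb{E}e^{\lambda Y}\le e^{\lambda^2(b-a)^2/8}$ for $\mathbb{E}Y=0$ and $a\le Y\le b$. I may assume $a<0<b$: if $a=b$ or $a=0$ or $b=0$, then $Y=0$ almost surely and the claim is trivial.

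Second, I will use convexity of $t\mapsto e^{\lambda t}$ on $[a,b]$. For $a\le y\le b$,
\begin{equation*}
e^{\lambda y}\le \frac{b-y}{b-a}e^{\lambda a}+\frac{y-a}{b-a}e^{\lambda b}.
\end{equation*}
Taking expectations and using $\mathbb{E}Y=0$ gives
\begin{equation*}
\mathbb{E}e^{\lambda Y}\le \frac{b}{b-a}e^{\lambda a}-\frac{a}{b-a}e^{\lambda b}.
\end{equation*}
I then set $p=-a/(b-a)\in(0,1)$ and $h=\lambda(b-a)$. With these, the right-hand side equals $e^{L(h)}$, where
\begin{equation*}
L(h)=-hp+\ln\bigl(1-p+pe^{h}\bigr).
\end{equation*}

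Third, and this is the only real computation, I will show $L(h)\le h^2/8$. Clearly $L(0)=0$. Differentiating gives
\begin{equation*}
L'(h)=-p+\frac{pe^h}{1-p+pe^h},
\end{equation*}
so $L'(0)=0$. Writing $q=\frac{pe^h}{1-p+pe^h}\in(0,1)$, the second derivative is $L''(h)=q(1-q)\le \tfrac14$. Taylor's theorem with Lagrange remainder then yields $L(h)=\tfrac12 L''(\xi)h^2\le h^2/8$ for some $\xi$ between $0$ and $h$. Substituting $h=\lambda(b-a)$ gives the desired bound $e^{\lambda^2(b-a)^2/8}$.

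The main obstacle is purely computational. One has to recognise the right reparametrisation $(p,h)$, so that the convex-combination bound becomes $e^{L(h)}$ with $L(0)=L'(0)=0$. One must also check carefully that $L''$ takes the form $q(1-q)$, since the uniform bound $\tfrac14$ is what produces the constant $\tfrac18$. Once that is in place, the rest follows directly.
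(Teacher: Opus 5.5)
Your proof is correct: the reduction to the centred case, the chord bound from convexity of $t\mapsto e^{\lambda t}$, the reparametrisation $p=-a/(b-a)$, $h=\lambda(b-a)$ that turns the bound into $e^{L(h)}$, and the Taylor estimate using $L(0)=L'(0)=0$ and $L''=q(1-q)\le\tfrac14$ all check out, for every real $\lambda$. The paper gives no proof of its own and simply cites Hoeffding's Lemma~1, and your argument is essentially Hoeffding's original proof of (4.16).
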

\begin{theorem}[{\cite[Theorem 1]{Chernoff1952}}]\label{Chernoff}
    Let $X$ be a random variable on a probability space $(X, \mu)$. Then
    \begin{equation*}
        \mu(\{X\geq t\})\leq \inf_{\lambda>0}\mathbb{E}(e^{\lambda X})e^{-\lambda t}.
    \end{equation*}
\end{theorem}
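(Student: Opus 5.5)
The plan is to derive the bound from Markov's inequality applied to the exponential moment. I will prove the estimate for each fixed $\lambda>0$ separately and then pass to the infimum.

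Fix $\lambda>0$. The map $u\mapsto e^{\lambda u}$ is strictly increasing and positive on $\mathbb{R}$. Hence the events coincide:
\begin{equation*}
\{X\geq t\}=\{e^{\lambda X}\geq e^{\lambda t}\}.
\end{equation*}
The random variable $Y=e^{\lambda X}$ is nonnegative and measurable, being a continuous function of $X$. If $\mathbb{E}Y=\infty$, the asserted inequality for this $\lambda$ is trivial. Otherwise, Markov's inequality gives
\begin{equation*}
\mu(\{Y\geq e^{\lambda t}\})\leq \mathbb{E}(Y)\,e^{-\lambda t}.
\end{equation*}
Markov's inequality itself follows by integrating the pointwise bound $e^{\lambda t}\mathbf{1}_{\{Y\geq e^{\lambda t}\}}\leq Y$. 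Together with the identity of events, this yields
\begin{equation*}
\mu(\{X\geq t\})\leq \mathbb{E}(e^{\lambda X})\,e^{-\lambda t}\quad\text{for every }\lambda>0.
\end{equation*}

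The left-hand side does not depend on $\lambda$. Taking the infimum over $\lambda>0$ on the right-hand side therefore gives the claimed inequality.

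There is no genuine obstacle. The only points needing care are the trivial case of an infinite exponential moment, and the fact that strict monotonicity of the exponential is what makes the two events identical rather than merely nested. In fact, the inclusion $\{X\geq t\}\subseteq\{e^{\lambda X}\geq e^{\lambda t}\}$ alone suffices for the bound.

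In the paper, this bound will be combined with Theorem~\ref{Hoeffding_lemma}, applied to sums of independent occurrence indicators within a fixed residue class, and then optimized over $\lambda$.
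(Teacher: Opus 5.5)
Your proof is correct and complete: applying Markov's inequality to the nonnegative variable $e^{\lambda X}$ for each fixed $\lambda>0$ (via $\{X\geq t\}\subseteq\{e^{\lambda X}\geq e^{\lambda t}\}$) and then taking the infimum over $\lambda$ is the standard derivation of this bound. The paper gives no proof of its own here and simply cites Chernoff, whose work concerns the sharper asymptotic rate for sums and contains this inequality as its elementary upper-bound part, so there is no different route to compare against.
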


Let us recall one more classical result for limit convergence of natural sequences.
\begin{theorem}[Stolz-Cesaro Theorem]
    Let $(a_n)_{n=1}^\infty, (b_n)_{n=1}^\infty$ be two strictly monotonic and divergent sequences. Then we have that
    \begin{equation*}
        \lim_{n\to\infty}\frac{a_n-a_{n-1}}{b_n-b_{n-1}}=\ell\implies \lim_{n\to\infty}\frac{a_n}{b_n}=\ell,
    \end{equation*}
    where $\ell\in\mathbb{R}\cup\{-\infty, \infty\}$.
\end{theorem}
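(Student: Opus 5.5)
The statement "immediately above" is the Stolz--Ces\`aro theorem. The plan is the standard $\epsilon$-argument: telescope the differences and compare partial sums. I give the finite case first, then the infinite case.

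\textbf{Setup.} I will treat the main case, where $(b_n)$ is strictly increasing with $b_n\to\infty$. (The decreasing case with $b_n\to-\infty$ follows by replacing $(a_n,b_n)$ with $(-a_n,-b_n)$, which leaves the quotients unchanged.) Only the hypotheses on $(b_n)$ are really used; monotonicity of $(a_n)$ is not needed. Since $b_n-b_{n-1}>0$, each difference quotient is well defined.

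\textbf{Finite limit.} Suppose $\ell\in\mathbb{R}$ and fix $\epsilon>0$. Choose $N$ such that for all $n>N$,
\begin{equation*}
(\ell-\epsilon)(b_n-b_{n-1})<a_n-a_{n-1}<(\ell+\epsilon)(b_n-b_{n-1}).
\end{equation*}
Summing these inequalities from $N+1$ to $n$ telescopes to
\begin{equation*}
(\ell-\epsilon)(b_n-b_N)<a_n-a_N<(\ell+\epsilon)(b_n-b_N).
\end{equation*}
For $n$ large we have $b_n>0$, so we may divide by $b_n$:
\begin{equation*}
\frac{a_n}{b_n}=\frac{a_N}{b_n}+\frac{a_n-a_N}{b_n},
\qquad
\frac{a_n-a_N}{b_n}\in\Bigl((\ell-\epsilon)\Bigl(1-\frac{b_N}{b_n}\Bigr),\,(\ell+\epsilon)\Bigl(1-\frac{b_N}{b_n}\Bigr)\Bigr).
\end{equation*}
Because $b_n\to\infty$ with $N$ fixed, both $a_N/b_n$ and $b_N/b_n$ tend to $0$. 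Hence
\begin{equation*}
\ell-\epsilon\le\liminf_{n\to\infty}\frac{a_n}{b_n}\le\limsup_{n\to\infty}\frac{a_n}{b_n}\le\ell+\epsilon.
\end{equation*}
Letting $\epsilon\to0$ gives $a_n/b_n\to\ell$.

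\textbf{Infinite limit.} Suppose $\ell=+\infty$ and fix $M>0$. For $n>N$ we have $a_n-a_{n-1}>M(b_n-b_{n-1})$, which telescopes to $a_n-a_N>M(b_n-b_N)$. Dividing by $b_n$ gives $\liminf_{n\to\infty} a_n/b_n\ge M$. Since $M$ is arbitrary, $a_n/b_n\to+\infty$. The case $\ell=-\infty$ follows symmetrically, or by applying the $+\infty$ case to $(-a_n)$.

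\textbf{Main obstacle.} The only delicate point is the role of the hypotheses. Divergence of $b_n$ is exactly what makes the initial terms $a_N/b_n$ and $b_N/b_n$ negligible. Strict monotonicity is what keeps the inequalities pointing the same way when they are multiplied by $b_n-b_{n-1}$ and summed.
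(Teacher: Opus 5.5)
Your proof is correct. It is the standard telescoping $\epsilon$-argument, and the paper gives no proof of its own: it simply recalls Stolz--Ces\`aro as a classical fact.

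Your argument actually proves a slightly stronger statement, with no extra work:
\[
\liminf_{n\to\infty}\frac{a_n-a_{n-1}}{b_n-b_{n-1}}\le\liminf_{n\to\infty}\frac{a_n}{b_n}\le\limsup_{n\to\infty}\frac{a_n}{b_n}\le\limsup_{n\to\infty}\frac{a_n-a_{n-1}}{b_n-b_{n-1}}.
\]
This $\liminf$/$\limsup$ form is what the paper implicitly uses when it applies the theorem to a $\liminf$ in the Hausdorff dimension estimate.
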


\subsection{Hausdorff dimension of Moran sets}\label{Moran}
The following theorem is due to Feng, Wen, and Wu \cite{moran_technique}. We present here the adapted version that suits best our considerations.

Let $(d_n)_{n=1}^\infty\subset\mathbb{N}_{\geq 2}$ and $(c_n)_{n=1}^\infty\subset (0, 1)$ be two sequences such that $d_n c_n\leq 1$ for all $n\in\mathbb{N}$. Let
\begin{equation*}
    D_n=\{s_1\dots s_n\colon\; 1\leq s_j\leq d_j,\;j=1,\dots, n\}
\end{equation*}
be the set of all finite blocks of length $n\in\mathbb{N}$ such that each symbol $s_j\in\{1,\dots, d_j\}$. Define $D=\bigcup_{n\geq 0}D_n$, where we assume that $D_0=\emptyset$.
\begin{definition}
    Let $\mathcal{F}=\{J_\omega\colon\;\omega\in D\}$ be a collection of closed subsets of the unit interval $[0, 1]$ with the following properties:
    \begin{enumerate}
        \item $J_\emptyset=[0, 1]$,
        \item For any $n\in\mathbb{N}$ and $\omega$ in $D_{n-1}$, the sets $J_{\omega 1},\dots, J_{\omega d_n}$ are subintervals of $J_{\omega}$ with pairwise disjoint interiors,
        \item For any $n\in\mathbb{N}$ and $\omega$ in $D_{n-1}$ we have $\frac{\diam(J_{\omega s})}{\diam J_{\omega}}=c_n$, where $s\in\{1,\dots, d_n\}$.
    \end{enumerate}
    Then the set $\mathbb{C}_\infty=\bigcap_{n\geq 1}\bigcup_{\omega\in D_n}J_\omega$ is a {\it homogeneous Moran set determined by $\mathcal{F}$.}
\end{definition}
\begin{theorem}[{\cite[Theorem 2.1]{moran_technique}}]
    The Hausdorff dimension
    \begin{equation*}
        \dim_H(\mathbb{C}_\infty)\geq \liminf_{n\to\infty}\frac{\log(d_1\dots d_n)}{-\log (c_1\dots c_{n+1}d_{n+1})}.
    \end{equation*}
\end{theorem}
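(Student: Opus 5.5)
We prove the lower bound with the mass distribution principle (see \cite[Chapter 4]{Falconer}): we build a natural probability measure on $\mathbb{C}_\infty$ and show that it gives every small set $U$ mass at most a constant times $\diam(U)^s$, for every $s$ below the stated $\liminf$.

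\emph{Step 1: the measure.} Write $P_n=c_1\cdots c_n$, so that $\diam(J_\omega)=P_n$ for $\omega\in D_n$. Define a set function by
\begin{equation*}
\mu(J_\omega)=\frac{1}{d_1\cdots d_n}\qquad\text{for }\omega\in D_n.
\end{equation*}
This assignment is consistent: the $d_{n+1}$ children of $J_\omega$ share its mass equally. It therefore extends to a Borel probability measure supported on $\mathbb{C}_\infty$, by the usual Kolmogorov/Carath\'eodory extension along the nested tree of intervals.

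Overlaps at endpoints cause no trouble. Siblings have disjoint interiors, so they meet in at most one point. Hence the boundary points of all the $J_\omega$ form a countable set. If some such point carried positive mass, that mass would be at most $\mu(J_\omega)$ for every level-$n$ interval containing it. Since $d_j\ge 2$, we have $\mu(J_\omega)\le 2^{-n}\to 0$, so every point has $\mu$-mass zero.

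\emph{Step 2: estimate $\mu(U)$.} Let $s$ be strictly smaller than the $\liminf$ in the statement; we may assume it is positive. Take an interval $U$ with small diameter $|U|$, and choose $n$ such that
\begin{equation*}
P_{n+1}\le |U|<P_n.
\end{equation*}
\begin{itemize}
\item The level-$n$ intervals have length $P_n>|U|$ and pairwise disjoint interiors, so $U$ meets at most $2$ of them.
\item Inside a fixed level-$n$ interval, the children have length $P_{n+1}$ and disjoint interiors. So $U$ meets at most $\min\{d_{n+1},\,|U|/P_{n+1}+2\}$ of them.
\item Because $|U|\ge P_{n+1}$, we have $|U|/P_{n+1}+2\le 3|U|/P_{n+1}$.
\end{itemize}
Combining these and using $\min\{a,b\}\le a^{1-s}b^s$, we get
\begin{equation*}
\mu(U)\le \frac{6\min\{d_{n+1},|U|/P_{n+1}\}}{d_1\cdots d_{n+1}}
\le \frac{6\,d_{n+1}^{1-s}\,|U|^s\,P_{n+1}^{-s}}{d_1\cdots d_{n+1}}
=\frac{6\,|U|^s}{(d_1\cdots d_n)\,(P_{n+1}d_{n+1})^{s}}.
\end{equation*}

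\emph{Step 3: conclude.} Note that $P_{n+1}d_{n+1}\le 1$, because $c_jd_j\le 1$ for all $j$. Since $s$ is below the $\liminf$, for all sufficiently large $n$ we have
\begin{equation*}
\log(d_1\cdots d_n)\ge -s\log(P_{n+1}d_{n+1}),
\quad\text{that is,}\quad d_1\cdots d_n\ge (P_{n+1}d_{n+1})^{-s}.
\end{equation*}
As $|U|\to 0$ the index $n\to\infty$, so for all sufficiently small $U$ we obtain $\mu(U)\le 6|U|^s$. An arbitrary set is contained in an interval of the same diameter, so the bound holds for all sets $U$ of small diameter. The mass distribution principle then gives $\mathcal H^s(\mathbb{C}_\infty)>0$, hence $\dim_H(\mathbb{C}_\infty)\ge s$. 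Letting $s$ increase to the $\liminf$ proves the claim.

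The main obstacle is the counting in Step 2. We must bound the number of level-$(n+1)$ pieces that $U$ meets in two ways at once (by $d_{n+1}$ and by $|U|/P_{n+1}$) and then interpolate between them. This interpolation is exactly what produces the unusual denominator $-\log(c_1\cdots c_{n+1}d_{n+1})$ in the statement.
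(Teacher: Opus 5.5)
Your proof is correct. The paper does not prove this statement; it quotes it from Feng, Wen and Wu \cite{moran_technique}. Your route is the standard mass distribution proof of that lower bound: you put the uniform measure on the Moran tree, and you observe that an interval $U$ with $c_1\cdots c_{n+1}\le |U|<c_1\cdots c_n$ meets at most two level-$n$ intervals. It also meets at most $\min\{d_{n+1},3|U|/(c_1\cdots c_{n+1})\}$ children of each. You then interpolate with $\min\{a,b\}\le a^{1-s}b^s$. That interpolation is what produces the factor $d_{n+1}$ in the denominator.

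Two small points of polish. First, the interpolation inequality needs $0\le s\le 1$, which you do not state. It holds automatically: since $c_jd_j\le 1$ for all $j$, we have $\log(d_1\cdots d_n)\le -\log(c_1\cdots c_n)\le -\log(c_1\cdots c_{n+1}d_{n+1})$, so every ratio, and hence the $\liminf$, is at most $1$.

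Second, your treatment of shared endpoints in Step 1 is slightly circular as written. You use $\mu(J_\omega)=1/(d_1\cdots d_n)$ for the extended Borel measure to show points are null, but that null-point fact is what you need to justify the formula in the first place. It is cleaner to define $\mu$ as the push-forward of the uniform product measure on $\prod_j\{1,\dots,d_j\}$ under the coding map. Then $\mu(U)$ is bounded directly by the total product measure of the level-$(n+1)$ cylinders whose intervals meet $U$, and no discussion of atoms is needed.
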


\section{Proof of Theorem~\ref{Main}}
We first observe the following straightforward corollary of Theorem~\ref{BesicovitchTypical}.
\begin{corollary}
    The set $\textit{TpcN}\setminus\textit{Normal}$ has full Hausdorff dimension.
\end{corollary}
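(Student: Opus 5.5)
The plan is to exhibit, inside $\textit{TpcN}\setminus\textit{Normal}$, the sets $B(\alpha)\cap E(1)$ with $\alpha\neq\frac12$, and let $\alpha\to\frac12$.

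\textbf{Step 1: the inclusion.} Fix $\alpha\in[0,1]$ with $\alpha\neq\frac12$. If $x\in B(\alpha)$, then the frequency of the block $\omega=1$ (with $k=1$) satisfies
\begin{equation*}
\lim_{n\to\infty}\frac{N(x,1,n)}{n}=\lim_{n\to\infty}\frac{S_n(x)}{n}=\alpha\neq\tfrac12 .
\end{equation*}
So $x$ is not normal, which gives $B(\alpha)\cap\textit{Normal}=\emptyset$. Since $E(1)=\textit{TpcN}$, we obtain
\begin{equation*}
B(\alpha)\cap E(1)\subset \textit{TpcN}\setminus\textit{Normal}.
\end{equation*}
We use the convention fixed earlier, that each number is identified with its eventually-zero expansion. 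Only the chosen expansion enters $S_n$, $R_n$ and $N$, so this convention causes no ambiguity in the inclusion.

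\textbf{Step 2: dimension bound and limit.} By Theorem~\ref{BesicovitchTypical} with $\beta=1$ and monotonicity of Hausdorff dimension,
\begin{equation*}
\dim_H(\textit{TpcN}\setminus\textit{Normal})\geq \frac{H(\alpha)}{\ln 2}.
\end{equation*}
Take a sequence $\alpha_j\to\frac12$ with $\alpha_j\neq\frac12$. The function $H$ is continuous and $H(\frac12)=\ln 2$, so the right-hand side tends to $1$. The upper bound $\dim_H\leq\dim_H[0,1]=1$ is trivial, so the dimension equals $1$.

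\textbf{Main obstacle.} There is essentially none. The only point needing care is that the statement of Theorem~\ref{BesicovitchTypical} treats $\alpha=\frac12$ and $\beta=1$ uniformly, yet that parameter value must be excluded here. Excluding it is harmless precisely because $H$ is continuous at $\frac12$.
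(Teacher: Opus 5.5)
Your proof is correct and matches the paper's argument: you use the inclusion $B(\alpha)\cap E(1)\subset\textit{TpcN}\setminus\textit{Normal}$ for $\alpha\neq\frac12$ together with Theorem~\ref{BesicovitchTypical}. The only cosmetic difference is that the paper takes the countable union over rational $\alpha\neq\frac12$ and invokes countable stability. You instead apply monotonicity to each $\alpha$ separately and let $\alpha\to\frac12$, which is equally valid and slightly more direct.
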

\begin{proof}
    Observe that we have $\textit{Normal}\subset B\pr{\frac{1}{2}}$ and hence $$\textit{TpcN}\setminus B\pr{\frac{1}{2}}\subset\textit{TpcN}\setminus\textit{Normal}.$$
    Thus, we have that $B(\alpha)\cap E(1)\subset\textit{TpcN}\setminus\textit{Normal}$ for all ${\alpha\in(0, 1)\setminus\{\frac{1}{2}\}}$. Therefore, we may use Theorem~\ref{BesicovitchTypical} to show that
    \begin{equation*}
        \begin{aligned}
        \dim_H(\textit{TpcN}\setminus\textit{Normal})&\geq \dim_H\pr{\bigcup_{\alpha\in\mathbb{Q}\cap{[0, 1]}\setminus\{\frac{1}{2}\}}B(\alpha)\cap E(1)}\\
            &=\sup\br{\dim_H(B(\alpha)\cap E(1))\colon \; {\alpha\in\mathbb{Q}\cap{[0, 1]}\setminus\br{1/2}}}=1.\qedhere
        \end{aligned}
    \end{equation*}
\end{proof}

We may now focus our attention on showing that $\textit{Normal}\setminus \textit{TpcN}$ has full Hausdorff dimension. This proof is considerably more involved. 

Given sequences $(\delta_k)_{k=1}^\infty \subset (0,1/2]$, $(\epsilon_k)_{k=1}^\infty \subset (0,1)$, and $(N_k)_{k=1}^\infty \subset\N$, we define a special set $X=X((\delta_k),(\epsilon_k),(N_k))$ as follows:
\begin{equation*}
    X=\{x\in[0, 1]\colon\; x=0.B_1\beta_1B_2\beta_2\dots,\; B_k\in L_k,\; \beta_k=1^{(\floor{4\log N_k}+1)}, \;k\in\mathbb{N}\},
\end{equation*}
where $L_k$ is the family of all $(\epsilon_k, k)$-normal blocks of length $N_k$,
each $B_k\in L_k$, and $\beta_k=1^{(\floor{4\log N_k}+1)}$ is a block of length $\floor{4\log N_k}+1$ of consecutive ones. Under the conditions introduced below, we will prove in Lemma \ref{X_Moran} that $X$ is a homogeneous Moran set, which will allow us to estimate its Hausdorff dimension using the techniques developed in \cite{moran_technique}.

Although the set $X$ is defined for arbitrary sequences as above, we must
impose additional conditions on $(\delta_k)$, $(\epsilon_k)$, and $(N_k)$
to ensure that the resulting set has the properties required in the proof.
We collect these conditions in the following definition.
\begin{definition}
    A triple $((\delta_k),(\epsilon_k),(N_k))$ is called \textit{good} if the
following conditions hold:
\begin{enumerate}
    \item The product $\prod_{k=1}^\infty(1-\delta_k)$ converges,
    \item for all $k \in \N$, $\epsilon_{k+1}\leq\frac{1}{3}\epsilon_k$
    \item  $N_k>\max\{2k, \floor{4\log N_k}+1\}$,
    \item $\#L_k > (1-\delta_k)2^{N_k}$,
    \item There exists $s \in [1,\infty)$ such that for all $k \in \N$ we have $N_k<N_{k+1}<sN_k$
\end{enumerate}
\end{definition}

We will show the following theorem.
\begin{theorem}\label{FractalHelps}
If $((\delta_k),(\epsilon_k),(N_k))$ is good, then $X \subset \textit{Normal}\setminus \textit{TpcN}$ and $X$ has full Hausdorff dimension.
\end{theorem}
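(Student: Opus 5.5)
The proof of Theorem~\ref{FractalHelps} splits into three independent claims: every $x\in X$ fails to be typical, every $x\in X$ is normal, and $\dim_H X=1$. Throughout, write $\ell_k=\floor{4\log N_k}+1$ for the length of $\beta_k$, and $T_k=\sum_{j\le k}(N_j+\ell_j)$ for the position where $\beta_k$ ends.

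First, a preliminary observation. Condition (4) forces $L_k\neq\emptyset$. Condition (2) gives $\epsilon_k\le 3^{-(k-1)}\epsilon_1<2^{-k}$ for $k\ge 2$, since $\epsilon_1<1$. So an $(\epsilon_k,k)$-normal block contains each of the $2^k$ blocks of length $k$ strictly more than $(2^{-k}-\epsilon_k)N_k>0$ times, hence at least once. Therefore $N_k\ge 2^k$, so $k/N_k\to 0$ and $\log T_k=O(\log N_k)$. More precisely, $N_j<N_k$ for $j<k$, $\ell_j<N_j$ and $k\le N_k$ give $T_k\le 2kN_k\le 2N_k^2$.

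\emph{Non-typicality.} Since $\beta_k$ ends at $T_k$, we have $R_{T_k}(x)\ge \ell_k\ge 4\log N_k$. Combining this with $\log T_k\le 2\log N_k+1$ gives $\limsup_n R_n(x)/\log n\ge 2>1$. Hence $x\notin\textit{TpcN}$.

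\emph{Normality.} Here I would verify the hotspot lemma (Theorem~\ref{constant_C}) with $C$ of order $s$.

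1. Fix $\omega$ of length $m$. For $j\ge m$, Lemma~\ref{epsilonkmnormality} (with $k+m$ replaced by $j$) shows that $B_j$ is $(2^{j-m}\epsilon_j+j/N_j,\,m)$-normal. Since $2^{j-m}\epsilon_j\le 2^{-m}(2/3)^{j-1}\epsilon_1\to0$ and $j/N_j\to0$, we get
\[
N(B_j,\omega,N_j)\le N_j\bigl(2^{-m}+\eta_j\bigr),\qquad \eta_j\to 0.
\]
2. For $T_k\le n<T_{k+1}$, bound $N(x,\omega,n)$ by the sum of four contributions:
   (a) the counts inside the complete blocks $B_j$, $j\le k$;
   (b) all of $\beta_j$, $j\le k$, which has total length $\sum\ell_j=O(k\log N_k)=o(n)$;
   (c) at most $m$ occurrences straddling each junction, $O(mk)=o(n)$ in total;
   (d) the occurrences inside the partial block, a prefix of $B_{k+1}$ or of $\beta_{k+1}$.
3. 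The prefix term (d) is the delicate point, since prefixes of normal blocks are not controlled. I would bound it crudely by the count in the whole block, $N(B_{k+1},\omega,N_{k+1})\le N_{k+1}(2^{-m}+\eta_{k+1})$. Then condition (5) gives $N_{k+1}<sN_k\le s n$, so this term contributes at most $s(2^{-m}+o(1))$ to $N(x,\omega,n)/n$.
4. For (a), the early blocks $j<m$ are finitely many and contribute $o(n)$. The remaining blocks give at most $\sum_{j\le k}N_j(2^{-m}+\eta_j)\le n2^{-m}+o(n)$, using Stolz--Cesàro (or a direct Cesàro argument) and $\eta_j\to0$.

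Adding up, $\limsup_n N(x,\omega,n)/n\le (s+1)2^{-m}$, and Theorem~\ref{constant_C} yields normality.

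\emph{Dimension.} I would realize $X$ (up to the countably many dyadic points, which do not affect dimension) as a homogeneous Moran set as in Section~\ref{Moran}.
1. Set $d_k=\#L_k$ and $c_k=2^{-(N_k+\ell_k)}$. The level-$k$ sets $J_\omega$ are the closed dyadic intervals of length $2^{-T_k}$ determined by $B_1\beta_1\cdots B_k\beta_k$. They have disjoint interiors, and $c_kd_k\le 2^{N_k}2^{-N_k-\ell_k}\le1$.
2. The Feng--Wen--Wu bound then reads
\[
\dim_H X\ge\liminf_n\frac{\sum_{j\le n}N_j+\sum_{j\le n}\log(1-\delta_j)}{\sum_{j\le n}(N_j+\ell_j)+\ell_{n+1}+\bigl(N_{n+1}-\log d_{n+1}\bigr)}.
\]
3. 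Condition (1) makes $\sum\log(1-\delta_j)$ bounded, and $N_{n+1}-\log d_{n+1}\le -\log(1-\delta_{n+1})$ is also bounded. Moreover $\ell_j=O(\log N_j)=o(N_j)$, and $\ell_{n+1}=O(\log (sN_n))$.
4. Since $\sum N_j\to\infty$, the ratio tends to $1$, giving $\dim_H X=1$.

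I expect the main obstacle to be the normality step, specifically the uncontrolled prefix of $B_{k+1}$ and the boundary term $j/N_j$ from Lemma~\ref{epsilonkmnormality}. The former is exactly why the hotspot lemma with a constant $C$ (rather than exact limits) together with condition (5) is used. The latter is resolved by the observation $N_k\ge 2^k$.
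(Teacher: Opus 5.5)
Your proposal is correct and follows essentially the same route as the paper: the runs $\beta_k$ ending at $T_k\le 2kN_k$ force $\limsup_n R_n/\log n\ge 2$, normality comes from the hotspot lemma with constant about $s+1$ (bounding the partial block crudely by its full-block count via condition (5)), and the dimension comes from the Feng--Wen--Wu homogeneous Moran bound with $d_k=\#L_k$ and $c_k=2^{-(N_k+\ell_k)}$. Your observation $N_k\ge 2^k$ (valid for $k\ge 3$ rather than $k\ge 2$, since $\epsilon_2$ may exceed $1/4$) supplies the fact $k/N_k\to 0$, which the paper's Lemma~\ref{m_0} uses without deriving it from the good-triple conditions; your remark about dyadic endpoints is also slightly more careful than the paper's identification $\mathbb{C}_\infty=X$.
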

Theorem~\ref{Main} will thus immediately follow from Theorem~\ref{FractalHelps} provided that we show that there exist any good triples. Thus, we will first show that there exist good triples by constructing an example.

\begin{lemma}
    The set of good triples is nonempty. In particular, we may choose $\delta_k=\frac{1}{(k+1)^2}$, $\epsilon_k=3^{-k}$, and $N_k=k^23^{2k+1}+k-1$.
\end{lemma}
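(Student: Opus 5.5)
The plan is to check the five conditions in the definition of a good triple one at a time. Conditions (1), (2), (3) and (5) are elementary. Condition (4) is the real content and will come from the Chernoff--Hoeffding estimates, Theorems~\ref{Hoeffding_lemma} and~\ref{Chernoff}.

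\textbf{The elementary conditions.} For (1), $\prod_{k\ge1}(1-\frac{1}{(k+1)^2})$ telescopes to $\frac12$, since $\sum (k+1)^{-2}<\infty$. Condition (2) holds with equality. For (3), $N_k\ge 9k^2\cdot 3^{2k-1}$ dominates $2k$, and it also dominates $4\log N_k+1=O(k)$; the case $k=1$ is $N_1=27>\floor{4\log 27}+1=20$. For (5), $N_k$ is strictly increasing, and
\[
\frac{N_{k+1}}{N_k}\le \frac{9(k+1)^2}{k^2}+o(1)\le 36+o(1).
\]
So $s=40$ works, after checking the first few values directly (for example $N_2/N_1=973/27<37$).

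\textbf{Condition (4), set-up.} Fix $k$ and $\omega\in\{0,1\}^k$, and take a uniformly random word $B\in\{0,1\}^{N_k}$. Occurrences of $\omega$ can start at positions $1,\dots,N_k-k+1$. The choice of $N_k$ is designed so that $N_k-k+1=k^2 3^{2k+1}$ is divisible by $k$. Splitting the starting positions into residue classes modulo $k$ gives exactly $k$ classes, each containing
\[
M=k\,3^{2k+1}
\]
positions. Within one class, occurrences at different positions involve disjoint sets of digits, so the indicators are independent Bernoulli$(2^{-k})$ variables. The expected total count is $k^2 3^{2k+1}2^{-k}$, which differs from $N_k2^{-k}$ by $(k-1)2^{-k}<1$.

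\textbf{Condition (4), the estimate.} It suffices that every class sum $S_r$ satisfies $|S_r-M2^{-k}|<tM$, where $t=\epsilon_k-\eta_k$ and $\eta_k$ is a tiny correction chosen so that $k\,tM+1\le \epsilon_k N_k$. Applying Theorem~\ref{Hoeffding_lemma} to each summand and then Theorem~\ref{Chernoff} with the optimal $\lambda$ gives the one-sided bound $e^{-2Mt^2}$, so each class deviates with probability at most $2e^{-2Mt^2}$. Since
\[
M\epsilon_k^2=k\,3^{2k+1}3^{-2k}=3k,
\]
we get $2Mt^2\approx 6k$, say $2Mt^2\ge 5.9k$. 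A union bound over the $k$ residue classes and the $2^k$ blocks $\omega$ then bounds the failure probability by
\[
2k\,2^k e^{-5.9k}=2k\,(2e^{-5.9})^k .
\]
This is below $\frac{1}{(k+1)^2}$ for every $k$: at $k=1$ it is about $0.011<\frac14$, and it decays geometrically afterwards. Multiplying by $2^{N_k}$ turns this into $\#L_k>(1-\delta_k)2^{N_k}$, and the strict inequalities required in Definition~\ref{eps_block} come out automatically.

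\textbf{Main obstacle.} The delicate step is the constant bookkeeping in condition (4). Taking the crude tolerance $t=\epsilon_k/2$ only gives exponent $1.5k$, and $2k\,2^k e^{-1.5k}$ is too large at small $k$. So one must use nearly the whole tolerance $\epsilon_k$ per class, absorbing the $<1$ boundary discrepancy into $\eta_k$, and verify the small cases explicitly.
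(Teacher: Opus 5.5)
Your proof is correct and is essentially the paper's: residue classes mod $k$ give independent indicators, Hoeffding's lemma with Chernoff's bound controls each class, and a union bound over the $2^k$ blocks finishes. The only differences are that you union-bound over the $k$ classes where the paper merges them through Jensen's inequality on the moment generating function (a harmless extra factor $k$), and that you explicitly absorb the gap between the normalizations $kM=N_k-k+1$ and $N_k$, which the paper passes over.

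One small slip: with your correction $ktM+1\le\epsilon_k N_k$, the exponent at $k=1$ is $2Mt^2\approx 4.74$, not $\ge 5.9$. This is harmless, because at $k=1$ no correction is needed since $kM=N_1$, and in any case $4e^{-4.74}<\frac{1}{4}$.
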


\begin{proof}

Define the sequence $(\delta_k)$ by $\delta_k=\frac{1}{(k+1)^2} \in (0,1/2]$, so the product $\prod_{k=1}^\infty (1-\delta_k)=\frac{1}{2}$. Furthermore, let $\epsilon_k=\frac{1}{3^k}$. It remains to define $(N_k)_{k=1}^\infty$. In order to do so, we need to estimate the minimal value $N_k$ for which the number of $(\epsilon_k, k)$-normal blocks is bigger than $(1-\delta_k)2^{N_k}$, i.e. we need to satisfy property (4) of good triples.

Let $Y=\{0, 1\}^\mathbb{N}$ be the space of all infinite binary sequences. Let $(Y, \mu)$ be a probability space, where we consider $\mu$ to be the standard equidistributed Bernoulli measure on $\{0, 1\}$. One can see that we can express the occurrences of given block $\omega$ in $y$ by binary random variables
\begin{equation*}
    X_i(\omega)=\left\{
    \begin{aligned}
        &1, \; y_i\dots y_{i+k-1}=\omega,\\
        &0, \;\text{otherwise}.
    \end{aligned}
    \right.
\end{equation*}
Let $M_k=k\cdot m(k)$  for some positive integer valued function $m(k)$ that we will determine later. We also set $N_k=M_k+(k-1)$.
We are interested in minimizing the value
\begin{equation*}
    \mu\pr{\br{\frac{1}{M_k}|S_{M_k}(\omega)-\mathbb{E}S_{M_k}(\omega)|\geq \epsilon_k}},
\end{equation*}
where $S_n(\omega)=\sum_{i=1}^{n} X_i(\omega)$. In general, the sum $S_n(\omega)$ is not composed of independent random variables. In fact, one can see that the value $X_i(\omega)$ depends on $k$ consecutive digits of $y$. Thus, we have that the variables $X_i(\omega), X_{i+k}(\omega)$ are necessarily independent.  Note that the sum $S_n$ acts on the first $n+(k-1)$ digits in $y$.

It remains to construct an appropriate sequence $(M_k)_{k=1}^\infty$ so that the triple $((\delta_k),(\epsilon_k),(N_k))$ is good. Note that
\begin{equation*}
    S_{M_k}(\omega)=\sum_{i=1}^{k}\sum_{j=0}^{m(k)-1}X_{i+kj}(\omega)=\sum_{i=1}^{k}\hat{X_i}(\omega),
\end{equation*}
where each $\hat{X}_i(\omega)=\sum_{j=0}^{m(k)-1}X_{i+kj}(\omega)$ is a sum of $m(k)$ independent random variables.

We now turn towards the moment generating function $e^{S_{M_k}(\omega)}$ of $S_{M_k}(\omega)$. Since our reasoning will be symmetric, we present it here only for $S_{M_k}(\omega)-\mathbb{E}S_{M_k}(\omega)$. We have that
\begin{equation*}
    \mathbb{E}e^{\lambda(S_{M_k}(\omega)-\mathbb{E}S_{M_k}(\omega))}=\mathbb{E}e^{\lambda(\sum_{i=1}^{k}(\hat{X}_i(\omega)-\mathbb{E}\hat{X}_i(\omega)))}=\mathbb{E}e^{\sum_{i=1}^{k}\frac{1}{k}\lambda k(\hat{X}_i(\omega)-\mathbb{E}\hat{X}_i(\omega))}.
\end{equation*}
Since $e^x$ is convex, using Jensen's inequality and the fact that the expected value is linear, see that
\begin{equation*}
    \mathbb{E}e^{\lambda(S_{M_k}(\omega)-\mathbb{E}S_{M_k}(\omega))}\leq \frac{1}{k}\sum_{i=1}^{k}\mathbb{E}e^{\lambda k(\hat{X}_i(\omega)-\mathbb{E}\hat{X}_i(\omega))}.
\end{equation*}
Recall that each $\hat{X}_i(\omega)$ is a sum of $m(k)$ independent binary random variables. Hence, using Theorem \ref{Hoeffding_lemma} we obtain
\begin{equation*}
    \begin{aligned}
        \frac{1}{k}\sum_{i=1}^{k}\mathbb{E}e^{\lambda k(\hat{X}_i(\omega)-\mathbb{E}\hat{X}_i(\omega))}&=\frac{1}{k}\sum_{i=1}^{k}\prod_{j=0}^{m(k)-1}\mathbb{E}e^{\lambda k(X_{i+kj}(\omega)-\mathbb{E}X_{i+kj}(\omega))}\\
        &\leq \frac{1}{k}\sum_{i=1}^{k} e^{\frac{(\lambda k)^2}{8}m(k)}=e^{\frac{\lambda^2 k M_k}{8}}.
    \end{aligned}
\end{equation*}
We now use Theorem \ref{Chernoff}. We have

\begin{equation*}
    \mu(\{S_{M_k}(\omega)-\mathbb{E}S_{M_k}(\omega)\geq \epsilon_k\})\leq \inf_{\lambda>0}\mathbb{E}e^{\lambda (S_{M_k}(\omega)-\mathbb{E}S_{M_k}(\omega))}e^{-\lambda \epsilon_k}\leq \inf_{\lambda>0}e^{-\lambda \epsilon_k+\frac{\lambda^2 k M_k}{8}}.
\end{equation*}
We minimize the last expression for $\lambda$ and get that $\lambda_\text{min}=\frac{4\epsilon_k}{k M_k}>0$. Substituting $\lambda=\lambda_\text{min}$ in the above equation we get
\begin{equation*}
    \mu(\{S_{M_k}(\omega)-\mathbb{E}S_{M_k}(\omega)\geq\epsilon_k\})\leq e^{-\frac{2\epsilon_k^2}{k M_k}}.
\end{equation*}
As mentioned before, our reasoning is symmetric and thus
\begin{equation*}
    \mu(\{|S_{M_k}(\omega)-\mathbb{E}S_{M_k}(\omega)|\geq\epsilon_k\})\leq 2e^{-\frac{2\epsilon_k^2}{k M_k}}.
\end{equation*}
Hence, we have that
\begin{equation*}
    \begin{aligned}
        &\mu\pr{\br{\frac{1}{M_k}|S_{M_k}(\omega)-\mathbb{E}S_{M_k}(\omega)|\geq \epsilon_k}}=\\
        =&\mu(\{|S_{M_k}(\omega)-\mathbb{E}S_{M_k}(\omega)|\geq M_k\epsilon_k\})\leq 2e^{-2 m(k)\epsilon_k^2}.
    \end{aligned}
\end{equation*}
Moreover, one can see that $\mathbb{E}S_{M_k}=\frac{M_k}{2^k}$, as the underlying measure is the standard equidistributed Bernoulli measure on $\{0, 1\}$. Hence, we have that
\begin{equation*}
    \mu\pr{\br{\abs{\frac{S_{M_k}}{M_k}-\frac{1}{2^k}}\geq \epsilon_k}}\leq 2e^{-2 m(k)\epsilon_k^2}.
\end{equation*}
Since we need to take into account all blocks $\omega\in\{0, 1\}^k$ of length $k$, one can see that the value
\begin{equation*}
    \mu(\{y_1\dots y_{M_k+(k-1)}\text{ is not }(\epsilon_k, k)\text{-normal}\})\leq 2^{k+1}e^{-2 m(k)\epsilon_k^2},
\end{equation*}
where $y_1\dots y_{M_k+(k-1)}$ is the initial block of length $M_k+(k-1)$ in $y$. Hence, the number of blocks of length $M_k+(k-1)$ which are not $(\epsilon_k, k)$-normal is bounded above by $2^{(M_k+(k-1))+\alpha_k}$, where $$\alpha_k=k+1-\frac{2 m(k)\epsilon_k^2}{\ln 2}.$$

Therefore, in order to establish the lower bound for the value $M_k$ to satisfy property (4) of good triples, we need to solve the following inequality:
\begin{equation*}
    2^{M_k+k-1}-2^{M_k+k-1+\alpha_k}>(1-\delta_k)2^{M_k+k-1}\iff 2^{\alpha_k}<\delta_k.
\end{equation*}
We conclude that
\begin{equation*}
    m(k)>\frac{\ln 2}{2\epsilon_k^2}(k+1+2\log(k+1))=\frac{9^k\ln 2}{2}(k+1 +2\log(k+1)).
\end{equation*}
Thus, it is enough to take $m(k)=k\cdot 3^{2k+1}$ and thus $M_k=k^2\cdot 3^{2k+1}$. Observe that this sequence satisfies our initial assumption $M_k=k\cdot m(k)$. As mentioned before, we define the sequence $(N_k)_{k=1}^\infty$ by setting $N_k=M_k+(k-1)$. One can see that the limit
\begin{equation*}
    \lim_{k\to\infty}\frac{\log N_{k+1}}{N_k}=0
\end{equation*}
and moreover, the sequence $(N_k)_{k=1}^\infty$ satisfies
\begin{enumerate}
    \item $N_k>2k$,
    \item $N_k>\floor{4\log N_k}+1$,
    \item $N_k<N_{k+1}<37 N_{k}$,
    \item the number of $(\epsilon_k, k)$-normal blocks of length $N_k$ is bigger than $(1-\delta)2^{N_k}$.
\end{enumerate}
Hence, the triple $((\delta_k)_{k=1}^\infty, (\epsilon_k)_{k=1}^\infty, (N_k)_{k=1}^\infty)$ is good.
\end{proof}

Now that we know that the set of good triples in nonempty, we will prove Theorem~\ref{FractalHelps}. For the rest of this section, assume that $((\delta_k),(\epsilon_k),(N_k))$ is good and $X=X((\delta_k),(\epsilon_k),(N_k))$. We will present our reasoning as a sequence of lemmas, which combined show our main assertion.

\begin{lemma}
    $X \cap \textit{TpcN}=\emptyset$
\end{lemma}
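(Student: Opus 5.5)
We plan to show that every $x\in X$ satisfies $\limsup_{n\to\infty} R_n(\tilde x)/\log n\geq 4>1$, so the limit defining typicality cannot equal $1$. Fix $x=0.B_1\beta_1B_2\beta_2\dots\in X$. Let $P_k$ denote the position of the last digit of $\beta_k$ in $\tilde x$, so that
\begin{equation*}
P_k=\sum_{j=1}^{k}\bigl(N_j+\floor{4\log N_j}+1\bigr).
\end{equation*}
Since $\beta_k=1^{(\floor{4\log N_k}+1)}$ ends at position $P_k$, we get $R_{P_k}(\tilde x)\geq \floor{4\log N_k}+1> 4\log N_k$. It therefore suffices to compare $\log P_k$ with $\log N_k$.

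For the upper bound on $P_k$, we use condition (3) of good triples, $N_j>\floor{4\log N_j}+1$, to get $P_k\leq 2\sum_{j\leq k}N_j$. Since $(N_j)$ is strictly increasing, $\sum_{j\le k}N_j\leq kN_k$. By condition (3) again, $k<N_k/2$. Hence $P_k\leq N_k^2$, which gives only $\log P_k\leq 2\log N_k$ and $R_{P_k}/\log P_k>2$. That is already strictly bigger than $1$, which is all we need.

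If we want the sharper bound $\log P_k=(1+o(1))\log N_k$, we would instead note that $N_k>2k$ forces $N_k$ to grow at least linearly. Then $\log k=O(\log N_k)$, but more precisely $\log(kN_k)=\log N_k+\log k$, and $\log k\le \log N_k$ gives only the factor $2$. The factor $2$ suffices, so we will not pursue this.

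Conclusion: along the subsequence $n=P_k\to\infty$ we have $R_n(\tilde x)/\log n>2$. Hence $\limsup_n R_n(\tilde x)/\log n\geq 2$, so $x\notin\textit{TpcN}$. The only point needing care is the bookkeeping of $P_k$, namely that the cumulative length is polynomially bounded in $N_k$. This is exactly where condition (3), $N_k>2k$ together with $N_k>\floor{4\log N_k}+1$, enters. Condition (5) could give the alternative bound $P_k\le 2N_k\sum_{i\ge0}s^{-i}$ only if $N_j$ grows geometrically, which is not guaranteed, so we rely on the cruder estimate $P_k\le N_k^2$ above. There is no real obstacle beyond this.
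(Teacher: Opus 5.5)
Your proposal is correct and follows the paper's proof essentially step for step: you evaluate at the endpoints $P_k$ of the runs $\beta_k$, bound $P_k<2kN_k<N_k^2$ using condition (3) and the monotonicity of $(N_k)$, and conclude that $R_{P_k}/\log P_k>2$ along this subsequence. One small inconsistency: your opening sentence promises $\limsup\geq 4$, but you only prove $\geq 2$, which is all that is needed.
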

\begin{proof}
    For $x \in X$, let us evaluate the fraction $\frac{R_n(x)}{\log n}$ at positions $m_k=\sum_{i=1}^k |B_i|+|\beta_i|$. By property $(3)$ of good triples, we have that
    \begin{equation*}
    \begin{aligned}
        \log m_k&=\log (\sum_{i=1}^k |B_i|+|\beta_i|)\\
        &<\log(2\sum_{i=1}^k |B_i|)<\log (2kN_k)<2\log N_k
    \end{aligned}
    \end{equation*}
    and up to the position $m_k$ we have a sequence of consecutive ones of length $\floor{4\log N_k}+1$. Hence, there exists a subsequence $(m_k)_{k=1}^\infty$ along which the value $\liminf_{k\to\infty}\frac{R_{m_k}(x)}{\log m_k}\geq 2$. Thus, $x \notin \textit{TpcN}$.
\end{proof}

\begin{lemma}\label{m_0}
    Let $k \in \N$. Then the block $B_{k+m}$ is $(\tilde{\epsilon}_m,k)$ normal where $\tilde{\epsilon}_m=o(1)$.
\end{lemma}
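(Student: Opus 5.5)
The plan is a direct application of Lemma~\ref{epsilonkmnormality} with the block length shifted by $m$. Fix $k\in\N$ and $m\ge 0$. By construction $B_{k+m}\in L_{k+m}$, so $B_{k+m}$ is an $(\epsilon_{k+m},k+m)$-normal block of length $|B_{k+m}|=N_{k+m}$. Applying Lemma~\ref{epsilonkmnormality} with $\epsilon=\epsilon_{k+m}$, we obtain that $B_{k+m}$ is $(\tilde\epsilon_m,k)$-normal with
\begin{equation*}
\tilde\epsilon_m = 2^m\epsilon_{k+m}+\frac{m}{N_{k+m}}.
\end{equation*}
It then remains to show that both terms tend to $0$ as $m\to\infty$, with $k$ fixed.

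For the first term, use property (2) of good triples: $\epsilon_{j+1}\le\frac13\epsilon_j$. Iterating gives $\epsilon_{k+m}\le 3^{-m}\epsilon_k$, hence
\begin{equation*}
2^m\epsilon_{k+m}\le \left(\tfrac23\right)^m\epsilon_k\longrightarrow 0 .
\end{equation*}
This is exactly why the ratio $\frac13$ was built into the definition: the factor $2^m$ lost in Lemma~\ref{epsilonkmnormality} when passing from blocks of length $k+m$ to blocks of length $k$ has to be beaten by a geometric decay of $\epsilon_j$ faster than $2^{-j}$.

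For the second term, use property (3): $N_{k+m}>2(k+m)$, so
\begin{equation*}
\frac{m}{N_{k+m}}<\frac{m}{2(k+m)}<\frac12 .
\end{equation*}
Boundedness alone does not suffice, so I would also use the fact that $(N_j)$ is strictly increasing (property (5)) and grows. For the concrete triple, $N_j$ is exponential in $j$, so $m/N_{k+m}\to0$ trivially. For a general good triple I would argue from property (4): for an $(\epsilon_j,j)$-normal block of length $N_j$ to exist at all, every block of length $j$ must occur, which forces $N_j\ge 2^j$ (more precisely, the count must exceed $(2^{-j}-\epsilon_j)N_j>0$ occurrences). This gives $N_{k+m}\ge 2^{k+m}$ and so $m/N_{k+m}\to0$.

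The only delicate point is this second term in the general case: making sure that a good triple forces $N_j$ to grow superlinearly. I would try the counting observation just described first. Failing that, I would weaken the statement to an explicit bound $\tilde\epsilon_m\le (2/3)^m\epsilon_k+m/N_{k+m}$ and note that it is $o(1)$ for the good triple exhibited above, which is all the subsequent hotspot argument needs.
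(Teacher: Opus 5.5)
Your proposal is correct and is essentially the paper's proof: apply Lemma~\ref{epsilonkmnormality} to get $\tilde\epsilon_m=2^m\epsilon_{k+m}+\frac{m}{N_{k+m}}$, then use property (2) to bound the first term by $(2/3)^m\epsilon_k$. The paper simply asserts $\lim_{m\to\infty}\frac{k+m}{N_{k+m}}=0$, whereas your counting argument actually proves it, so your fallback is not needed. You should make explicit that $(2^{-j}-\epsilon_j)N_j>0$: it holds because property (2) and $\epsilon_1<1$ give $\epsilon_j<3^{1-j}<2^{-j}$ for $j\ge 3$. Then the $2^j$ distinct blocks need distinct starting positions, which yields $N_j\ge 2^j+j-1$.
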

\begin{proof}
    We note that by Lemma~\ref{epsilonkmnormality}, the block $B_{k+m}$ is $\pr{2^m\epsilon_{k+m}+\frac{m}{N_{k+m}},k}$-normal. However, $\epsilon_{k+m} \leq 3^{-m}\epsilon_k$ and $\lim_{m \to \infty} \frac{k+m}{N_{k+m}}=0$, so the block $B_{k+m}$ is $((2/3)^m \epsilon_k+o(1),k)$-normal.
\end{proof}

\begin{lemma}
    $X \subset \textit{Normal}$.
\end{lemma}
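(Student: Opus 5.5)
We will verify the criterion of the hotspot lemma (Theorem~\ref{constant_C}) with, say, $C=2$. Fix $x=0.B_1\beta_1B_2\beta_2\dots\in X$, an integer $k$ and a block $\omega\in\{0,1\}^k$. The plan is to show
\[
\limsup_{n\to\infty}\frac{N(x,\omega,n)}{n}\le 2^{-k},
\]
which is stronger than what is needed. For large $n$, let $K=K(n)$ be the index with $m_{K-1}< n\le m_K$, where $m_j=\sum_{i\le j}(|B_i|+|\beta_i|)$ as in the proof that $X\cap\textit{TpcN}=\emptyset$. We bound $N(x,\omega,n)$ by splitting occurrences of $\omega$ into three kinds:
\begin{itemize}
\item occurrences lying entirely inside some complete $B_i$ with $i\le K-1$, or inside the initial segment of $B_K$ up to position $n$;
\item occurrences lying inside, or overlapping, some run $\beta_i$;
\item occurrences straddling a boundary between $B_i$ and $\beta_i$.
\end{itemize}

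\emph{Step 1 (complete blocks).} By Lemma~\ref{m_0}, for $i\ge k$ the block $B_i$ is $(\tilde\epsilon_{i-k},k)$-normal with $\tilde\epsilon_{i-k}\to0$. Hence $N(B_i,\omega,N_i)\le (2^{-k}+\tilde\epsilon_{i-k})N_i$. The finitely many blocks with $i<k$ contribute $O(1)$.

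\emph{Step 2 (runs and straddling occurrences).} Each run $\beta_i$, together with the occurrences crossing its two boundaries, contributes at most $|\beta_i|+2k\le 4\log N_i+2+2k$ occurrences. Summing over $i\le K$ gives $O(K\log N_K+K^2)$. Since $m_{K-1}\ge N_{K-1}\ge N_K/s$ by property (5) and $N_K>2K$ by property (3), and $N_K$ grows at least geometrically... Here we need care: property (5) only gives $N_{k+1}>N_k$, so $K\le N_K$. We therefore use $\sum_{i\le K}|\beta_i|\le \sum_{i\le K} 4\log N_i+K$ and compare it with $\sum_{i\le K-1}N_i$. Because $\log N_i/N_i\to0$ and $N_i\to\infty$, Stolz--Ces\`aro gives $\sum_{i\le K}\log N_i=o\bigl(\sum_{i\le K-1}N_i\bigr)$, using $N_K<sN_{K-1}$ to absorb the last term. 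Similarly $K^2k=o\bigl(\sum N_i\bigr)$, since $N_i>2i$ and indeed $N_i\to\infty$ superlinearly enough; the latter needs checking. If this fails, we sum $k$ boundary terms against $N_i$ directly, giving $\sum_i k=o\bigl(\sum_i N_i\bigr)$ by Stolz--Ces\`aro since $k/N_i\to0$.

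\emph{Step 3 (the partial block $B_K$).} This is the main obstacle: normality of $B_K$ controls only its full length, not prefixes. We avoid the difficulty by using just the trivial bound: the occurrences in the prefix of $B_K$ of length $r$ are at most $N(B_K,\omega,N_K)\le(2^{-k}+\tilde\epsilon)N_K$ when $r$ is large, and at most $r$ always. If $n-m_{K-1}\ge N_K/\!\sqrt{\cdot}$ the ratio is fine only up to a constant factor. Concretely, we bound
\[
N(x,\omega,n)\le (2^{-k}+o(1))\Bigl(\sum_{i<K}N_i+N_K\Bigr)+o(n),
\]
and use $N_K<sN_{K-1}\le s\,n$. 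This yields
\[
\limsup_{n\to\infty}\frac{N(x,\omega,n)}{n}\le \frac{1+s}{2^k}.
\]
That bound is exactly the form required by Theorem~\ref{constant_C}, with $C=1+s$ independent of $k$ and $\omega$. This is why property (5) is imposed. Combining the three steps with the hotspot lemma shows that $x$ is normal, hence $X\subset\textit{Normal}$.
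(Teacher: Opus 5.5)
Your proposal is correct and is essentially the paper's argument. You apply the hotspot lemma, control occurrences inside completed blocks via Lemma~\ref{m_0}, and show by Stolz--Ces\`aro that occurrences touching a run $\beta_i$ or a block boundary are $o(n)$. You then bound the partial block $B_K$ by its full count using $N_K<sN_{K-1}<sn$, which yields $C=1+s$; the paper fixes $\epsilon<2^{-k}$ and gets $2(s+1)$.

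A few things to clean up in a write-up. Drop the opening promise of $\limsup\le 2^{-k}$ with $C=2$, which your argument neither proves nor needs. Replace the $O(K^2)$ boundary count with the correct $O(Kk)$, which is $o(n)$ directly because $N_i\to\infty$. Remove the garbled hedging sentences in Steps~2 and~3.
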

\begin{proof}

    Let $x \in X$.
    We will prove that $x$ is normal by using the Hotspot lemma (Theorem \ref{constant_C}). Fix $k\in\mathbb{N}$ and $\epsilon<\frac{1}{2^k}$, and let $\omega\in\{0, 1\}^k$ be a finite binary sequence of length $k$. We will estimate the value $\limsup_{n\to\infty}\frac{N(x, \omega, n)}{n}$.
   It follows from Lemma \ref{m_0} that there exists $m_0$ such that all blocks $B_{k+m}$ in the binary expansion of $x$  are $(\epsilon, k)$-normal for $m\geq m_0$. Let $m(n)$ be the maximal value for which $\sum_{i=1}^{m(n)}(N_i+\floor{4\log N_i}+1)< n$.  We have that
    \begin{equation*}
        \begin{aligned}
            \frac{N(x,\omega, n)}{n}&\leq \frac{\sum_{i=1}^{k+m_0-1}N(B_i, \omega, N_i)}{n}+\frac{\sum_{i=1}^{k+m_0-1}k+\floor{4\log N_i}+1}{n}\\
            &+\frac{\sum_{i=k+m_0}^{m(n)}N(B_i, \omega, N_i)}{\sum_{i=k+m_0}^{m(n)}N_i}+\frac{\sum_{i=k+m_0}^{m(n)}k+\floor{4\log N_i}+1}{\sum_{i=k+m_0}^{m(n)}N_i}\\
            &+\frac{N(B_{m(n)+1},\omega, N_{m(n)+1})+k+\floor{4\log N_{m(n)+1}}+1}{N_{m(n)}}.
        \end{aligned}
    \end{equation*}
    One can see that
    \begin{equation*}
        \begin{aligned}
            &\lim_{n\to\infty}\frac{\sum_{i=1}^{k+m_0-1}N(B_i, \omega, N_i)}{n}+\frac{\sum_{i=1}^{k+m_0-1}k+\floor{4\log N_i}+1}{n}\\
            &+\frac{\sum_{i=k+m_0}^{m(n)}k+\floor{4\log N_i}+1}{\sum_{i=k+m_0}^{m(n)}N_i}+\frac{k+\floor{4\log N_{m(n)+1}}+1}{N_{m(n)}}=0,
        \end{aligned}
    \end{equation*}
    which is a straightforward consequence of the Stolz-Cesaro Theorem. Thus, the value
    \begin{equation*}
        \begin{aligned}
            \limsup_{n\to\infty}\frac{N(x, \omega, n)}{n}\leq \limsup_{n\to\infty}&\frac{\sum_{i=k+m_0}^{m(n)}N(B_i, \omega, N_i)}{\sum_{i=k+m_0}^{m(n)}N_i}\\
            +&\frac{N(B_{m(n)+1},\omega, N_{m(n)+1})}{N_{m(n)}}.
        \end{aligned}
    \end{equation*}
    Let us compute each term separately. We have that 
    \begin{equation*}
        \limsup_{n\to\infty}\frac{\sum_{i=k+m_0}^{m(n)}N(B_i, \omega, N_i)}{\sum_{i=k+m_0}^{m(n)}N_i}\leq \limsup_{n\to\infty}\frac{\sum_{i=k+m_0}^{m(n)}(\epsilon+\frac{1}{2^k})N_i}{\sum_{i=k+m_0}^{m(n)}N_i}\leq \frac{2}{2^k}.
    \end{equation*}
    Similarly, we have that
    \begin{equation*}
        \limsup_{n\to\infty}\frac{N(B_{m(n)+1}\,\omega, N_{m(n)+1})}{N_{m(n)}}\leq \pr{\epsilon+\frac{1}{2^k}}\frac{N_{m(n)+1}}{N_{m(n)}}\leq \frac{2s}{2^k},
    \end{equation*}
    since we have that $N_{i+1} < sN_i$ for all $i\in\mathbb{N}$. Thus,
    \begin{equation*}
        \limsup_{n\to\infty}\frac{N(x, \omega, n)}{n}\leq \frac{2(s+1)}{2^k}.
    \end{equation*}
    Hence, the number $x$ is normal.
\end{proof}

\begin{lemma}\label{X_Moran}
    The set $X$ is a homogeneous Moran set.
\end{lemma}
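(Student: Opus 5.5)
We realize $X$ through the definition of a homogeneous Moran set in Subsection~\ref{Moran}. The natural choice makes each level of the Moran tree correspond to one stage $k$ of the construction, namely the block $B_k$ together with the forced run $\beta_k$. Set $\ell_k = N_k + \floor{4\log N_k}+1$ and $T_k=\sum_{i=1}^k \ell_i$. We take $d_k=\#L_k$ and $c_k = 2^{-\ell_k}$. Property (4) of good triples gives $d_k > (1-\delta_k)2^{N_k}\ge 2^{N_k-1}\ge 2$, since $\delta_k\le 1/2$ and $N_k\ge 2$. We also have $d_kc_k\le 2^{N_k}2^{-\ell_k}<1$.

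Next we fix, for each $k$, an enumeration $L_k=\{w^{(k)}_1,\dots,w^{(k)}_{d_k}\}$. To a word $s_1\dots s_n\in D_n$ we assign the finite binary string
$$u(s_1\dots s_n)=w^{(1)}_{s_1}\beta_1\,w^{(2)}_{s_2}\beta_2\cdots w^{(n)}_{s_n}\beta_n,$$
of length $T_n$. We then let $J_{s_1\dots s_n}$ be the closed dyadic interval of numbers whose binary expansion begins with $u(s_1\dots s_n)$, that is, $[0.u,\,0.u+2^{-T_n}]$. With $J_\emptyset=[0,1]$, the three axioms are checked as follows.

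\begin{enumerate}
\item Each $J_{\omega s}$ is a subinterval of $J_\omega$, since $u(\omega s)$ extends $u(\omega)$.
\item Distinct $s\neq s'$ give distinct words $w^{(n)}_s\neq w^{(n)}_{s'}$ of the same length. Hence the intervals $J_{\omega s}$, $J_{\omega s'}$ are distinct dyadic intervals of the same generation, so their interiors are disjoint.
\item The ratio of diameters is $\diam J_{\omega s}/\diam J_\omega = 2^{-T_n}/2^{-T_{n-1}}=2^{-\ell_n}=c_n$, which is independent of $\omega$ and $s$.
\end{enumerate}

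It remains to identify $\mathbb{C}_\infty=\bigcap_n\bigcup_{\omega\in D_n}J_\omega$ with $X$. The inclusion $X\subset\mathbb{C}_\infty$ is immediate: a point $x=0.B_1\beta_1B_2\beta_2\dots$ has each prefix $u$ of the above form, so $x$ lies in the corresponding nested intervals.

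For the reverse inclusion, take $x\in\mathbb{C}_\infty$. Since the intervals at each level have disjoint interiors and are nested, we can pick a consistent infinite branch $s_1s_2\dots$ with $x\in J_{s_1\dots s_n}$ for all $n$. The diameters shrink to $0$, so $x$ is the unique point $0.w^{(1)}_{s_1}\beta_1w^{(2)}_{s_2}\beta_2\cdots$.

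The main obstacle is the endpoint ambiguity. A point lying on a common endpoint of two adjacent dyadic intervals could a priori be assigned to either branch, and its chosen expansion (eventually $0$) might differ from the infinite word generated by the branch. This is resolved by noting that every infinite word produced by the construction contains infinitely many ones, coming from the blocks $\beta_k$, and infinitely many zeros, since each $(\epsilon_k,k)$-normal block with $\epsilon_k<2^{-k}$ contains the block $0^k$. So no such word is eventually constant, and the point it represents is not a dyadic rational. Consequently, by the remark on expansions, the chosen binary expansion of $x$ coincides with the constructed word, which gives $x\in X$ and hence $X=\mathbb{C}_\infty$. Finally, we record the parameters $d_k$ and $c_k$ for the dimension computation that follows.
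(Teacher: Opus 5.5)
Your proposal is correct and follows essentially the same route as the paper: the same choice $d_k=\#L_k$, $c_k=2^{-(N_k+\floor{4\log N_k}+1)}$, an enumeration of each $L_k$ identifying $D_n$ with the concatenations $B_1\beta_1\cdots B_n\beta_n$, and cylinder intervals as the sets $J_\omega$. You are in fact more careful than the paper about using closed intervals and about the endpoint issue in $\mathbb{C}_\infty=X$. Two small justifications should be stated explicitly: the existence of a consistent branch follows from finite branching (K\"onig's lemma) rather than from disjointness of interiors, and $\epsilon_k<3^{1-k}\le 2^{-k}$ holds for $k\ge 3$ by condition (2), which guarantees infinitely many zeros.
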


\begin{proof}
    Let $(d_n)_{n=1}^\infty$ be a sequence defined by setting $d_n=\# L_n$ and $(c_n)_{n=1}^\infty$ be a sequence defined by setting $c_n=2^{-(N_n+\floor{4\log N_n}+1)}$. One can see that these sequences satisfy the properties stated in Section \ref{Moran}. Indeed, observe since $N_k>\floor{4\log N_k}+1$ and $\delta_k\leq \frac{1}{2}$ for all $k\in\mathbb{N}$, we have that the value $(1-\delta_k)2^{N_k}\geq 2$.

Thus, in each $L_k$ there are at least 2 distinct binary sequences. Moreover, one can see that there are at most $2^{N_k}$ blocks in each $L_k$ and hence, $d_nc_n\leq 1$ for all $n\in\mathbb{N}$.
    
    We may identify the set $D_n$ defined in Section \ref{Moran} with the set $$\hat{L}_n=\{\hat{B}=B_1\beta_1\dots B_n\beta_n\colon\; B_i\in L_i, \; i=1,\dots, n\}.$$ We do so by enumerating elements in each $L_n$, i.e. we associate a natural number $M(B_n)$ to each $B_n\in L_n$. Then we set $M(\hat{B})=M(B_1)\dots M(B_n)$, which is the concatenation of numbers associated to the sequences $B_1,\dots, B_n$ that form $\hat{B}$. Observe that setting $\hat{L}=\bigcup_{n\geq 0}\hat{L}_n$ we further extend this identification with the set $D=\bigcup_{n\geq 0}D_n$, where we assume that $\hat{L}_0=\emptyset$. In this setting, the family $\mathcal{F}$ is the family of all cylinder sets $J_{M(\hat{B})}=[\hat{B}]\subset [0, 1]$ defined by the sequences $\hat{B}\in\hat{L}$, i.e.
    $$[\hat{B}]=\{x\in[0, 1]\colon \;x_i=\hat{B}_i, \;i=1,\dots, |\hat{B}|\}$$
    and $J_\emptyset=[0, 1]$. Thus, one can see that the sets $J_{M(\hat{B})M(B_{n+1})}, J_{M(\hat{B})B'_{n+1}}$ for distinct $B_{n+1}, B'_{n+1}$ are two subintervals of $J_{M(\hat{B})}$ with disjoint interiors, as these are two different cylinder sets in $[0, 1]$. Moreover, we have that $$\frac{\diam\pr{J_{M(\hat{B})M(B_{n+1})}}} {\diam\pr{J_{M(\hat{B})}}}=c_{n+1},\text{ where }B_{n+1}\in L_{n+1}, \;\hat{B}\in\hat{L}_n.$$ Thus, we have that
    \begin{equation*}
        \mathbb{C}_\infty=\bigcap_{n\geq 1}\bigcup_{M(\hat{B})\in D_n} J_{M(\hat{B})}=\bigcap_{n\geq 1}\bigcup_{\hat{B}\in\hat{L}_n}[\hat{B}]=X.
    \end{equation*}
\end{proof}

\begin{lemma}
    The Hausdorff dimension $\dim_H(X)=1$.
\end{lemma}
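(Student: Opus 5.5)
Since $X\subset[0,1]$, the upper bound $\dim_H(X)\le 1$ is immediate, and only the lower bound needs work. By Lemma~\ref{X_Moran}, $X$ is a homogeneous Moran set with $d_n=\#L_n$ and $c_n=2^{-(N_n+\floor{4\log N_n}+1)}$. We therefore apply the Feng--Wen--Wu estimate from Section~\ref{Moran}:
\begin{equation*}
\dim_H(X)\geq \liminf_{n\to\infty}\frac{\log(d_1\cdots d_n)}{-\log(c_1\cdots c_{n+1}d_{n+1})}.
\end{equation*}
The plan is to show that this $\liminf$ equals $1$. We bound the numerator from below and the denominator from above.

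For the numerator, property (4) of good triples gives $d_i>(1-\delta_i)2^{N_i}$. Hence
\begin{equation*}
\log(d_1\cdots d_n)\geq \sum_{i=1}^n N_i+\log\prod_{i=1}^n(1-\delta_i).
\end{equation*}
By property (1) and $\delta_i\le 1/2$, the product converges to a positive limit $P$. The second term is therefore bounded below by the constant $\log P>-\infty$.

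For the denominator, $d_{n+1}\le 2^{N_{n+1}}$, so
\begin{equation*}
-\log(c_1\cdots c_{n+1}d_{n+1})=\sum_{i=1}^{n+1}\bigl(N_i+\floor{4\log N_i}+1\bigr)-\log d_{n+1}\le \sum_{i=1}^{n}N_i+\sum_{i=1}^{n+1}\bigl(\floor{4\log N_i}+1\bigr).
\end{equation*}
The key cancellation is that the $N_{n+1}$ contributed by $c_{n+1}$ is offset by $\log d_{n+1}\ge N_{n+1}+\log(1-\delta_{n+1})$. This matters because $N_{n+1}$ could be comparable to $\sum_{i\le n}N_i$ and would otherwise spoil the ratio. (Since the triple is good, $N_{n+1}<sN_n$, but the cancellation makes this bound unnecessary here.) Writing $S_n=\sum_{i=1}^nN_i$, the ratio is therefore at least
\begin{equation*}
\frac{S_n+\log P}{S_n+\sum_{i=1}^{n+1}(4\log N_i+1)}.
\end{equation*}

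It remains to show that $\sum_{i=1}^{n+1}(4\log N_i+1)=o(S_n)$. This is the only step needing care, and I expect it to be the main (though mild) obstacle. The first approach is as follows. Since $N_i$ is increasing, $\sum_{i\le n+1}(4\log N_i+1)\le (n+1)(4\log N_{n+1}+1)$. Using $N_{n+1}<sN_n$ gives $\log N_{n+1}\le \log s+\log N_n$. Meanwhile $S_n\ge N_n$, and also $S_n\geq n$ since the $N_i$ are distinct positive integers. The condition $N_k>2k$ alone does not give $n\log N_n=o(N_n)$, so as an alternative I would use Stolz--Ces\`aro. With $a_n=\sum_{i\le n}(4\log N_i+1)$ and $b_n=S_n$, it suffices that $(4\log N_n+1)/N_n\to0$, which holds because $N_n\to\infty$. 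This gives $a_n/b_n\to 0$. The extra term $4\log N_{n+1}+1\le 4\log s+4\log N_n+1$ is also $o(S_n)$. Hence both the numerator and the denominator are $S_n(1+o(1))$, and since $S_n\to\infty$, the $\liminf$ equals $1$. Therefore $\dim_H(X)\ge 1$, and so $\dim_H(X)=1$.
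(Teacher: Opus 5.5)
Your proposal is correct and follows essentially the same route as the paper: the Feng--Wen--Wu bound, $d_i>(1-\delta_i)2^{N_i}$ in the numerator, cancellation of $N_{n+1}$ against $\log d_{n+1}$ in the denominator, and Stolz--Ces\`aro together with $N_{n+1}<sN_n$ to make the logarithmic terms $o(S_n)$. One small correction: your displayed upper bound on the denominator cannot come from $d_{n+1}\le 2^{N_{n+1}}$, which points the wrong way; it comes from $\log d_{n+1}\ge N_{n+1}+\log(1-\delta_{n+1})$, so the display must also carry the term $-\log(1-\delta_{n+1})\le 1$, which is harmless since it is $o(S_n)$ (the paper's own computation drops the same term).
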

\begin{proof}
    We have that 
    \begin{equation*}
        \dim_H(X)\geq \liminf_{n\to\infty}\frac{\log(d_1\dots d_n)}{-\log(c_1\dots c_{n+1}d_{n+1})},
    \end{equation*}
    where each $d_k>(1-\delta_k)2^{N_k}$, $c_k=2^{-(N_k+\floor{4\log N_k}+1)}$. Then
    \begin{equation*}
        \begin{aligned}
            \dim_H(X)&\geq \\&\geq \liminf_{n\to\infty}\frac{\log(\prod_{k=1}^n (1-\delta_k)2^{N_k})}{\log(\prod_{k=1}^{n+1} 2^{N_k+\floor{4 \log N_k}+1})-\log 2^{N_{k+1}}(1-\delta_{k+1})}\\
            &\geq\liminf_{n\to\infty}\frac{\log(\prod_{k=1}^n 2^{N_k})+\log(\prod_{k=1}^n(1-\delta_k))}{\log(\prod_{k=1}^n 2^{N_k+\floor{4\log N_k}+1})+\log(2^{\floor{4\log N_{n+1}}+1})}\\
            &=\liminf_{n\to\infty}\frac{N_n}{N_n+\log N_n+\log N_{n+1}},
        \end{aligned}
    \end{equation*}
    which follows from the Stolz-Cesaro Theorem applied to the simplified fraction. Thus, we have that
    \begin{equation*}
        \dim_H(X)\geq \liminf_{n\to\infty}\frac{N_n}{N_n(1+\frac{\log N_n}{N_n}+\frac{\log N_{n+1}}{N_n})}=1.\qedhere
    \end{equation*}
\end{proof}
The proof of Theorem~\ref{FractalHelps} is complete.

\section{Ackowledgements}
B.~Mance and J.~Tomaszewski are supported by grant 2019/34/E/ST1/00082 for the project ``Set theoretic methods in dynamics and number theory,'' NCN (The National Science Centre of Poland).

J.~Tomaszewski gratefully acknowledges the hospitality of the University of Maryland, College Park, during a research visit in which part of this work was carried out. The visit was supported by the Fulbright Program.

\printbibliography
\end{document}